\newtheorem{theorem}{Theorem}[section]
\newtheorem{remark}{Remark}[section]
\newtheorem{corollary}{Corollary}[section]
\newtheorem{definition}{Definition}[section]
\newtheorem{lemma}{Lemma}[section]
\newtheorem{proposition}{Proposition}[section]
\begin{document}
	\title{Variational Reformulation of Generalized Nash Equilibrium Problems with Non-ordered Preferences}
	\author{
		Asrifa Sultana\footnotemark[1] \footnotemark[2] , Shivani Valecha\footnotemark[2] }
	\date{ }
	\maketitle
	\def\thefootnote{\fnsymbol{footnote}}
	
	\footnotetext[1]{ Corresponding author. e-mail- {\tt asrifa@iitbhilai.ac.in}}
	\noindent
	\footnotetext[2]{Department of Mathematics, Indian Institute of Technology Bhilai, Raipur - 492015, India.
	}
	%\author{Asrifa Sultana\footnotemark[1] , Shivani Valecha}
	%\address{Department of Mathematics, Indian Institute of Technology Bhilai, Raipur - 492015, India}
	%\date{}
	
	%\begin{document}
	
	%\maketitle
	%\footnotetext[1]{
		%Department of Mathematics, Indian Institute of Technology Bhilai, Raipur - 492015, India\\
		
		%}
	\begin{abstract}\noindent
		The preferences of players in non-cooperative games represent their choice in the set of available options, which meet the completeness property if players are able to compare any pair of available options. In the existing literature, the variational reformulation of generalized Nash games relies on the numerical representation of players' preferences into objective functions, which is only possible if preferences are transitive and complete. %But, in real-world scenarios the completeness property for players' preferences is not always valid. %The completeness property for the preferences of players is necessary for its numerical representation. But, in real-world scenarios the completeness property for players' preferences is not always valid. 
		In this work, %we aim to study variational reformulation for the generalized Nash equilibrium problems in which the players have non-ordered (incomplete and non-transitive) non-convex inter-dependent preferences. 
		we first characterize the jointly convex generalized Nash equilibrium problems in terms of a variational inequality without requiring any numerical representation of preferences. Furthermore, we provide the suitable conditions under which any solution of a quasi-variational inequality becomes an equilibrium for the generalized Nash game with non-ordered (incomplete and non-transitive) non-convex inter-dependent preferences. % We provide the sufficient conditions required for the existence of equilibrium in such games. 
		We check the solvability of these games when the strategy maps of players are (possibly) unbounded. As an application, we derive the occurrence of competitive equilibrium for Arrow-Debreu economy under uncertainty.
	\end{abstract}
	{\bf Keywords:}
	variational inequality; generalized Nash equilibrium problem; quasi-variational inequality; non-ordered preference; binary relation\\
	{\bf Mathematics Subject Classification:}
	49J40, 49J53, 91B06, 91B42
	%\noindent {\bf Key words:}
	
	\section{Introduction}
	\hspace{1em}
	The preference of any player in non-cooperative games represents his choice in the set of available options and it mostly depends on other players' choices \cite{shafer}. According to Debreu \cite{debreubook,debreu1}, the preference of any player is representable in terms of real-valued objective functions if it meets certain conditions, namely, transitivity and completeness. In game theory, one can maximize the corresponding objective functions in place of maximizing preferences of players if the mentioned conditions are fulfilled. In particular, the following form of generalized Nash equilibrium problem (GNEP) is well-known in literature, where each player intends to maximize a real-valued objective function representing his preference \cite{debreu,faccsurvey}. Consider a set $\Lambda=\{1,2,\cdots N\}$ consisting of $N$-players. Suppose any player $i\in \mathcal \Lambda$ regulates a strategy variable $x_i\in \mathbb{R}^{n_i}$ where $\sum_{i\in \Lambda} n_i=n$. Then, we denote $x=(x_i)_{i\in \Lambda}\in \mathbb{R}^n$ as $x=(x_{-i},x_i)$ where $x_{-i}$ is the strategy vector of all the players except player $i$. The feasible strategy set of each player is given as $K_i(x)\subseteq \mathbb{R}^{n_i}$, which depends on his strategy and the strategies chosen by rivals. For a given strategy $x_{-i}$ of rivals, any player $i$ intends to choose a strategy $x_i\in K_i(x)$ such that $x_i$ maximizes his objective function $u_i:\mathbb{R}^n\rightarrow \mathbb{R}$,
	\begin{equation}\label{GNEP2}
		u_i(x_{-i},x_i) = \max_{y_i\in K_i(x)} u_i(x_{-i}, y_i).
	\end{equation}
	Suppose $Sol_i(x_{-i})$ consists of all such vectors $x_i$, which solves the problem (\ref{GNEP2}). Then a vector $\bar x$ is known as generalized Nash equilibrium if $\bar x_i\in Sol_i(\bar x_{-i})$ for each $i\in \mathcal{I}$. For a given $x=(x_{-i},x_i)\in \mathbb{R}^n$, if the feasible strategy set $K_i(x)$ is particularly defined as
	\begin{equation}\label{Kiconv}
		K_i(x)=\{y_i\in\mathbb{R}^{n_i}|\,(x_{-i},y_i)\in \mathcal X\},
	\end{equation} 
where $\mathcal X\subseteq\mathbb{R}^n$ is a non-empty closed convex set. Then, this type of GNEP is classified as jointly convex GNEP \cite{rosen,facc,ausselGNEP}.
	
	The variational inequality (VI) theory introduced by Stampacchia \cite{stampacchia}, works as an efficient tool to study optimization problems and Nash games \cite{ausselnormal,ausselGNEP,facc,milasipref2021}. %It is worth to mention that GNEP with real-valued objective functions can be reformulated as a quasi-variational inequality (QVI) \cite{chan,faccsurvey}. Furthermore, Rosen \cite{rosen} introduced a special class of GNEP known as ``jointly convex generalized Nash equilibrium problems", which can be characterized by using variational inequalities under certain assumptions \cite{facc,ausselGNEP}.
	It is well known that solving a generalized Nash equilibrium problem with concave differentiable objective functions is equivalent to solving a quasi-variational inequality (QVI) where the operator is formed by using gradients of all objective functions \cite{faccsurvey}. Further, one can determine some solutions of jointly convex GNEP having concave differentiable objective functions by solving a VI problem, in which the operator is formed by using gradients of all objective functions \cite{facc}. In the case of jointly convex GNEP with quasi-concave objective functions, the operator for an associated VI problem is formed by using normal cones to super-level sets corresponding to the objective functions \cite{ausselGNEP,cotrina}. This characterization of jointly convex GNEP through VI problems is helpful in studying several complex economic problems, for instance, time-dependent electricity market model \cite{aussel-rachna}. In the existing literature, the variational reformulation of GNEP relies on the numerical representation of players' preferences into objective functions, which is only possible if preferences are transitive and complete \cite{debreubook}. 
	
	In fact, preference of any player meets the completeness property if he is able to compare any pair of available options. As per \cite{neumann}, the completeness property for preferences is not always valid in real-world scenarios. This motivated several authors like Shafer-Sonnenschein \cite{shafer}, Tian \cite{tian}, He-Yannelis \cite{yannelis} to study GNEP with non-ordered (incomplete and non-transitive) inter-dependent preferences by using fixed point theory. On the other hand, Bade \cite{bade} studied Nash games with incomplete preferences by using scalarization approach. Recently, Milasi-Scopelliti \cite{milasipref2021} and Milasi-Puglis-Vitanza \cite{milasipref} 
	studied maximization problem for incomplete preference relations by using variational approach and as an application they derived existence results for economic equilibrium problems under uncertainty. However, there is no detailed study on the variational reformulation of a GNEP in which players have non-ordered inter-dependent preferences. %(see \cite{shafer,tian,yannelis} for more details on such games). 
	
	In this work, we aim to study variational reformulation for the generalized Nash equilibrium problems in which the players have non-ordered non-convex inter-dependent preferences (refer \cite{shafer,tian,yannelis} for more details on such games). In particular, we characterize the jointly convex GNEP in terms of a variational inequality without requiring any numerical representation of preferences. In this regard, the adapted normal cones corresponding to preferences are used by us to form an operator for the variational inequality associated with the jointly convex GNEP. Furthermore, we provide the suitable conditions under which any solution of a quasi-variational inequality becomes an equilibrium for the generalized Nash games considered in \cite{shafer,tian,yannelis}. We check the solvability of these games when the strategy maps of players are (possibly) unbounded by using variational technique. Finally, we apply our results to ensure the occurrence of competitive equilibrium for Arrow-Debreu economy under uncertainty.
	
	\section{Preliminaries}
	Suppose $C$ is a subset of $\mathbb{R}^m$, then the polar set of $C$ denoted by $C^{\circ}$ is generally defined as \cite{ausselnormal,cotrina}, 
	$$ C^{\circ}= \{y\in \mathbb{R}^m|\,\langle y,x\rangle\leq 0~\text{for all}~x\in C\}.$$
	Furthermore, the normal cone of the set $C$ at some point $x\in \mathbb{R}^m$ is given as,
	$$N_C(x)=(C-\{x\})^\circ=\{x^*\in \mathbb{R}^m|\,\langle x^*,y-x\rangle \leq 0~\text{for all}~y\in C\}.$$ 
	We assume $N_C(x)=\mathbb{R}^m$ whenever $C=\emptyset$. 
	
	For a given set $C\subseteq \mathbb{R}^m$, we denote $co(C)$ and $cl(C)$ as convex hull and closure of the set $C$, respectively. Moreover, $P:C\rightrightarrows \mathbb{R}^p$ denotes a multi-valued map, that is, $P(x)\subseteq \mathbb{R}^p$ for any $x\in C$. The map $P$ is said to admit open upper sections \cite{tian} if $P(x)$ is open for any $x\in C$. The reader may refer to \cite{aubin}, in order to recall some important concepts of upper semi-continuous (u.s.c.), lower semi-continuous (l.s.c.) and closed multi-valued maps. 
	
	Let us recollect the definitions of variational and quasi-variational inequality problems. Suppose $K\subseteq \mathbb{R}^n$ and $T:K\rightrightarrows \mathbb{R}^n$ is a multi-valued map. Then, the Stampacchia variational inequality problem $VI(T,K)$ \cite{aussel-hadj} corresponds to determine $\bar x\in K$ such that,
	$$\text{there exists}~\bar x^*\in T(\bar x)~\text{satisfying}~\langle \bar x^*,z-\bar x\rangle\geq 0~\text{for all}~z\in K.$$
	%Chan-Pang \cite{chan} extended the concept of variational inequality problem to quasi-variational inequality in which the constraint set $K$ is not fixed and it relies on the current point. 
	Suppose $T:D\rightrightarrows \mathcal{R}^n$ and $K:D\rightrightarrows D$ are multi-valued maps where $D\subseteq \mathbb{R}^n$. Then, the quasi-variational inequality problem $QVI(T,K)$ \cite{chan,ausselcoer} corresponds to determine $\bar x\in K(\bar x)$ such that,
	$$\text{there exists}~\bar x^*\in T(\bar x)~\text{satisfying}~\langle \bar x^*,z-\bar x\rangle\geq 0~\text{for all}~z\in K(\bar x).$$
	%Let us recall some important definitions related to continuity of a multi-valued map $P:C\rightrightarrows \mathbb{R}^p$,
	%\begin{itemize}
	%\item[-] The map $P$ is said to be upper semi-continuous at $x_\circ \in C$ if for any open set $V$ containing $P(x_\circ)$ there exists open set $U$ containing $x_\circ$ such that $P(U)\subseteq V$.
	%\item[-] The map $P$ is said to be lower semi-continuous at $x_\circ \in C$ if for any open set $V$ satisfying $P(x_\circ)\cap V\neq \emptyset$ there exists open set $U$ containing $x_\circ$ such that $P(x')\cap V\neq \emptyset$ for any $x'\in U$.
	%\item[-] 
	%\end{itemize}
	
	For given $Y_1\subseteq\mathbb{R}^m$ and $Y_2\subseteq\mathbb{R}^p$, suppose $P:Y_1\times Y_2\rightrightarrows Y_2$ is a multi-valued map. %Assume that $\tilde P:X\times Y\rightrightarrows \mathbb{R}^p$ is defined as $\tilde P(x)=co(P(x))$. Then, 
	Let us define a multi-valued map $\mathcal N_{P}:Y_1\times Y_2\rightrightarrows \mathbb{R}^p$ corresponding to the map $P$ as follows \cite{ausselGNEP,cotrina},
	\begin{equation}\label{normal}
		\mathcal N_{P}(x,y)= N_{P(x,y)}(y)= \{y^*\in \mathbb{R}^p|\,\langle y^*,z-y\rangle \leq 0~\forall~z\in P(x,y)\}.
	\end{equation}
	
	We require the following results in order to prove the upcoming existence results on generalized Nash games defined by preference relations, 
	\begin{proposition} \label{closed}
		Consider a map $P:Y_1\times Y_2\rightrightarrows Y_2$ where $Y_1$ and $Y_2$ are subsets of $\mathbb{R}^m$ and $\mathbb{R}^p$, respectively. Then, the map $\mathcal N_{P}:Y_1\times Y_2\rightrightarrows \mathbb{R}^p$ defined as (\ref{normal}) meets following properties,
		\begin{itemize}
			\item[(a)] $\mathcal N_{P}(x,y)\setminus \{0\}\neq \emptyset$ if $P(x,y)$ is a convex subset of $Y_2$ satisfying $y\notin P(x,y)$ for any $(x,y)\in Y_1\times Y_2$;
			\item[(b)] $\mathcal N_{P}$ is a closed map if $P:Y_1\times Y_2\rightrightarrows Y_2$ is lower semi-continuous.
		\end{itemize}
	\end{proposition}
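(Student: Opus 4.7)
The two parts are essentially separation versus a sequential graph-closedness argument, so I would handle them independently.

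For part (a), I would first dispose of the trivial case $P(x,y)=\emptyset$: by the convention recalled earlier, $\mathcal N_P(x,y)=\mathbb{R}^p$, so any nonzero vector lies in $\mathcal N_P(x,y)\setminus\{0\}$. When $P(x,y)$ is nonempty and convex, the hypothesis $y\notin P(x,y)$ means the convex sets $P(x,y)$ and $\{y\}$ are disjoint, so the finite-dimensional separation theorem (valid in $\mathbb{R}^p$ with no interior or compactness hypothesis) yields a nonzero $y^*\in\mathbb{R}^p$ and a scalar $\alpha$ with
\[
\langle y^*,z\rangle\leq \alpha\leq \langle y^*,y\rangle\quad\text{for all}~z\in P(x,y).
\]
Subtracting gives $\langle y^*,z-y\rangle\leq 0$ for every $z\in P(x,y)$, so $y^*\in N_{P(x,y)}(y)=\mathcal N_P(x,y)$, and $y^*\neq 0$ by construction.

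For part (b), I would prove closedness of $\mathcal N_P$ by verifying that its graph is sequentially closed. Take $(x_n,y_n,y_n^*)\to (x,y,y^*)$ with $y_n^*\in N_{P(x_n,y_n)}(y_n)$ for every $n$; the goal is $y^*\in N_{P(x,y)}(y)$, i.e. $\langle y^*,z-y\rangle\leq 0$ for each $z\in P(x,y)$. Fix such a $z$. Here the lower semicontinuity of $P$ at $(x,y)$ enters through its standard sequential characterization: there exists a sequence $z_n\in P(x_n,y_n)$ with $z_n\to z$. From the membership $y_n^*\in N_{P(x_n,y_n)}(y_n)$ I get $\langle y_n^*,z_n-y_n\rangle\leq 0$, and passing to the limit using the continuity of the inner product yields $\langle y^*,z-y\rangle\leq 0$, as desired.

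Neither step is really an obstacle; the only subtlety worth double-checking is part (a) when $P(x,y)$ is nonempty convex but $y$ is close to $\overline{P(x,y)}$: separation here is only weak (a supporting rather than strictly separating hyperplane), but weak separation is exactly what the definition of $N_{P(x,y)}(y)$ requires, so no refinement of the argument is needed. The only \emph{care} needed in part (b) is to invoke the correct sequential formulation of lower semicontinuity of a multivalued map (namely, for every $z\in P(x,y)$ and every $(x_n,y_n)\to (x,y)$ there exist $z_n\in P(x_n,y_n)$ with $z_n\to z$), which is the version available in \cite{aubin}.
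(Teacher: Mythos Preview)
Your proposal is correct and matches the paper's proof essentially line for line: part (a) via the empty case followed by the separation theorem applied to the disjoint convex sets $P(x,y)$ and $\{y\}$, and part (b) via the sequential graph-closedness argument using lower semicontinuity of $P$ to lift a point $z\in P(x,y)$ to an approximating sequence $z_n\in P(x_n,y_n)$. The only cosmetic difference is that the paper explicitly singles out the case $P(x,y)=\emptyset$ in part (b) as well, whereas you (correctly) absorb it into the vacuous quantification over $z\in P(x,y)$.
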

	\begin{proof}
		\begin{itemize}
			\item[(a)] Suppose $(x,y)\in Y_1\times Y_2$ is arbitrary. If $P(x,y)=\emptyset$, then our claim follows trivially as $\mathcal N_{P}(x,y)=\mathbb {R}^p$. Suppose $P(x,y)\neq \emptyset$ then in the view of assumption (a), we obtain some $0\neq y^*\in \mathbb R^p$ by applying separation theorem \cite[Section 2.5.1]{boyd} such that, $$\langle y^*,z\rangle \leq \langle y^*,y\rangle~\text{for all}~z\in P(x,y).$$ This, finally leads us to the conclusion $y^*\in \mathcal N_{P}(x,y)\setminus \{0\}$.
			\item[(b)]In order to prove that the map $\mathcal N_{P}$ is a closed map we consider sequences $\{(x_n,y_n)\}_{n\in \mathbb{N}}\in Y_1\times Y_2$ and $y_n^*\in  \mathcal N_{P}(x_n, y_n)~\forall~n\in \mathbb{N}$ such that $(x_n,y_n)\rightarrow(x,y)$ and $y_n^*\rightarrow y^*$. We aim to show that $y^*\in \mathcal N_{P}(x,y)$. In fact, the claim follows trivially if $P(x,y)=\emptyset$. Suppose $z\in P(x,y)$, then by employing the lower semi-continuity of $P$ we obtain some sequence $z_n\in P(x_n,y_n)$ such that $z_n\rightarrow z$. Since, we already have $\{y_n^*\}\in  \mathcal N_{P}(x_n, y_n)$, we obtain
			$$ \langle y_n^*, z_n-y_n\rangle \leq 0,\quad \forall n\in \mathbb{N}.$$ 
			Finally, we obtain $\langle y^*,z-y\rangle \leq 0$ by taking $n\rightarrow \infty$. Hence, $y^*\in \mathcal N_{P}(x,y)$ and $\mathcal N_{P}$ becomes a closed map.
		\end{itemize}
	\end{proof}
	
	\begin{proposition}\label{relation}
		Consider a map $P:Y_1\times Y_2\rightrightarrows Y_2$ where $Y_1$ and $Y_2$ are subsets of $\mathbb{R}^m$ and $\mathbb{R}^p$, respectively. Suppose $P(x,y)$ is an open convex subset of $Y_2$ for any $(x,y)\in Y_1\times Y_2$. Then, $\langle y^*,z-y\rangle<0$ for any $z\in P(x,y)$ and $0\neq y^*\in \mathcal N_{P}(x,y)$. 
	\end{proposition}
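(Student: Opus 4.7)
The inequality $\langle y^*, z-y \rangle \le 0$ is immediate from the definition of $\mathcal N_P(x,y)$, so the only content of the statement is the strictness. The natural approach is by contradiction: assume equality for some $z_0 \in P(x,y)$, and use openness of $P(x,y)$ to perturb $z_0$ slightly in the direction $y^*$ to produce a point in $P(x,y)$ that violates the normal-cone inequality.

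More precisely, fix $z_0 \in P(x,y)$ and a nonzero $y^* \in \mathcal N_P(x,y)$, and suppose for contradiction that $\langle y^*, z_0 - y\rangle = 0$. Since $P(x,y)$ is open, there exists $\varepsilon > 0$ such that the open ball centered at $z_0$ of radius $\varepsilon$ is contained in $P(x,y)$. Consequently, for every $t \in (0, \varepsilon / \|y^*\|)$, the point $z_t := z_0 + t y^*$ belongs to $P(x,y)$. A direct computation then gives
$$\langle y^*, z_t - y \rangle = \langle y^*, z_0 - y \rangle + t \|y^*\|^2 = t \|y^*\|^2 > 0,$$
which contradicts the fact that $y^* \in \mathcal N_P(x,y)$ forces $\langle y^*, z_t - y\rangle \le 0$. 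Hence no such $z_0$ with equality can exist, and the strict inequality $\langle y^*, z - y\rangle < 0$ holds for every $z \in P(x,y)$.

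The proof is short and essentially a separation/supporting-hyperplane argument; the only mild subtlety is the sense in which $P(x,y)$ is open. If one insists that openness is meant only relative to $Y_2$, then the perturbation $z_0 + t y^*$ may exit $Y_2$; in that case one would instead replace $y^*$ by its orthogonal projection onto the affine hull of $P(x,y)$ and check that this projection is still nonzero (otherwise $y^*$ would be orthogonal to a relatively open convex set of positive dimension, forcing $y^* = 0$). Under the natural reading of openness in $\mathbb R^p$ (which is what the rest of the paper will use), no such adjustment is needed, and the argument above is essentially the entire proof.
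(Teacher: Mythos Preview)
Your proof is correct and rests on the same mathematical fact the paper uses: a nonzero element of the polar of a set must give strict inequality at interior points. The paper does not carry out the perturbation argument itself but simply notes that $z-y\in\operatorname{int}(P(x,y)-\{y\})$ and then invokes \cite[Lemma~2.1]{cotrina} for the strict inequality; your contradiction-by-perturbation $z_t=z_0+t y^*$ is precisely an inline proof of that lemma, so your version is more self-contained while the paper's is shorter by outsourcing the step.

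One small remark on your final paragraph: the parenthetical justification that the projection of $y^*$ onto the affine hull of $P(x,y)$ is nonzero is not quite right as stated. If that projection vanishes, $y^*$ is orthogonal to the direction space of $P(x,y)$, which by itself does not force $y^*=0$; rather, combined with the assumed equality $\langle y^*,z_0-y\rangle=0$ it would give $\langle y^*,z-y\rangle=0$ for \emph{all} $z\in P(x,y)$, and one would then need $y\notin\operatorname{aff}(P(x,y))$ (or some additional structure) to reach a contradiction. Since the paper, like your main argument, reads ``open'' as open in $\mathbb{R}^p$, this caveat is moot here and your core proof stands as written.
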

	\begin{proof}
		%It is easy to observe that $\tilde P(x,y)$ is an open set, since $P(x,y)$ is an open set for any $(x,y)\in X\times Y$. 
		%If $P(x,y)=\emptyset$ then there is nothing to prove. 
		Suppose $z\in P(x,y)$ then $z-y\in (P(x,y)-\{y\})=int (P(x,y)-\{y\})$. In the view of \cite[Lemma 2.1]{cotrina}, we obtain $\langle y^*,z-y\rangle<0$ for any $y^*\in (P(x,y)-\{y\})^\circ=\mathcal N_{P}(x,y)$ with $y^*\neq 0$. 
	\end{proof}
	
	\section{Generalized Nash Games with Non-ordered Inter-dependent Preferences} \label{mainresult}
	The concept of generalized Nash equilibrium problems with non-ordered inter-dependent preferences was introduced by Shafer-Sonnenschein \cite{shafer}. They investigated the occurrence of equilibrium for such problems by using celebrated Kakutani's fixed point result. Later, several other researchers (like \cite{tian,yannelis}) derived the occurrence of equilibrium for such games by relaxing the sufficient conditions required in \cite{shafer}. 
	
	Let us recall the definition of GNEP with non-ordered inter-dependent preferences \cite{shafer,yannelis,tian}. Suppose $\Lambda=\{1,2,\cdots N\}$ denotes the set of involved agents. Let $X_i\subseteq \mathbb{R}^{n_i}$ denotes the choice set for each agent $i\in \Lambda$, where $\sum_{i\in \Lambda}n_i=n$. Suppose that 
   $$X=\prod_{i\in \Lambda} X_i \subseteq \mathbb{R}^n~\text{and}~ X_{-i}= \prod_{j\in (\Lambda\setminus \{i\})} X_j \subseteq \mathbb{R}^{n-n_i}.$$ 
	We consider the multi-valued maps $P_i:X\rightrightarrows X_i$ and $K_i:X\rightrightarrows X_i$ as preference map and constraint map, respectively, for any $i\in \Lambda$. Let us represent the generalized Nash game defined by preference maps, also known as \textit{abstract economy}, by N-ordered triples $\Gamma=(X_i,K_i,P_i)_{i\in \Lambda}$. A vector $\tilde x\in X$ is said to be equilibrium for the game $\Gamma$ if $\tilde x\in K_i(\tilde x)$ and $P_i(\tilde x)\cap K_i(\tilde x)=\emptyset$ for each $i\in \Lambda$.
	
	\begin{remark}\label{numGNEP}
		If the preference map $P_i:X_{-i}\times X_i\rightrightarrows X_i$ is representable by utility function $u_i:X_{-i}\times X_i\rightarrow \mathbb{R}$ then $P_i(x_{-i},x_i)=\{y_i\in X_i\,|\, u_i(x_{-i},y_i)>u_i(x_{-i},x_i)\}$ (see \cite{shafer}) and we observe the given game $\Gamma$ reduces to Arrow-Debreu abstract economy \cite{debreu}.
	\end{remark}
	\subsection{Jointly Convex GNEP with Preference maps and Variational Inequalities} \label{GNEP}
	%In this section, we study a special class of generalized Nash games introduced by Rosen \cite{rosen}, which is known as jointly convex GNEP. 
	In the seminal work by Rosen \cite{rosen}, a special class of GNEP known as \textit{jointly convex} GNEP or generalized Nash equilibrium problem with \textit{coupled constraints} was initiated. In past few years, Facchinei et al. \cite{facc}, Aussel-Dutta \cite{ausselGNEP} and Bueno-Cotrina \cite{cotrina} characterized the jointly convex GNEP in terms of a variational inequality under concavity, semi-strict quasi-concavity and quasi-concavity of objective functions, respectively. In this section, we characterize the jointly convex GNEP in terms of a variational inequality without requiring any numerical representation of preferences into objective functions. Thus, we consider that players have non-ordered inter-dependent preferences. Further, we provide the sufficient conditions required for occurrence of equilibrium in such games by using this characterization.% to check the solvability of a special class of abstract economies through variational inequalities. It is noticeable that we will not require any numerical representation of preferences for this purpose. 
	
	%According to the recent literature, the variational inequality theory plays an important role to solve the generalized Nash equilibrium problems which are defined by objective functions \cite{faccsurvey}. In the seminal work by Rosen \cite{rosen}, a special class of generalized Nash equilibrium problems were initiated which can be solved by finding a solution for associated variational inequality (refer \cite{rosen,facc,ausselGNEP} for more details). This special class of generalized Nash equilibrium problems is known as \textit{jointly convex} GNEP or GNEP with \textit{coupled constraints}. However, the concept of such GNEP is limited to only those games in which the preference relations are representable by utility functions. This fact motivated us to define the concept of generalized Nash equilibrium problems with coupled constraints for the case where preferences of players are non-ordered.
	
	\textcolor{black}{According to \cite{rosen,facc,ausselGNEP}, the feasible strategy set for any player $i\in \Lambda$ is defined as (\ref{Kiconv}) in a jointly convex GNEP. According to (\ref{Kiconv}), one can observe that the choice set $X_i$ for any player is actually the projection of the set $\mathcal{X}$ on $\mathbb{R}^{n_i}$ (see \cite{rosen,cotrina} for more details). In a given abstract economy $\Gamma=(X_i, K_i, P_i)_{i\in \Lambda}$, let us consider the case where the constraint map $K_i:X\rightrightarrows X_i$ is defined in terms of non-empty, convex and closed set $\mathcal X \subset \mathbb{R}^n$,
	\begin{equation}\label{constraint}
		K_i(x)=\{z_i\in X_i|\,(x_{-i}, z_i)\in \mathcal X\}.
	\end{equation} 
	 If the map $K_i$ is defined as (\ref{constraint}) for each $i\in \Lambda$ the game $\Gamma=(X_i, K_i, P_i)_{i\in\Lambda}$ reduces to a jointly convex GNEP denoted by $\Gamma'=(\mathcal{X}, K_i, P_i)_{i\in \Lambda}$.} Note that the constraint maps $K_i$ are actually coupled due to joint constraint set $\mathcal X$.  %and find that by considering that the preference relations of players are representable by utility functions. considering the case w initiated by Rosen  .  Thus, let us recall the concepts related to variational inequalities.%We study a specific type of generalized Nash games having coupled constraints. These games were earlier studied for the case where preference relations of the players are representable by utility functions \cite{rosen, facc, ausselGNEP, cotrina}.  
%	Our main motivation behind studying this specific class of generalized Nash games is to check the solvability of such games by using variational inequalities.
	
	Let us define the map $\tilde{P}_i:X_{-i}\times X_i\rightrightarrows \mathbb{R}^{n_i}$ as $\tilde P_i(x)= co(P_i(x))$. In order to construct a variational inequality corresponding to the game $\Gamma$, we define a map $T_i:X_{-i}\times X_i\rightrightarrows \mathbb{R}^{n_i}$ for each $i\in \Lambda$ as follows,
	\begin{equation} \label{Ti}
		T_i(x_{-i},x_i)=co(\mathcal{N}_{\tilde P_i}(x_{-i},x_i)\cap S_i[0,1]),
	\end{equation} 
	where $\mathcal{N}_{\tilde P_i}:X_{-i}\times X_i\rightrightarrows \mathbb{R}^{n_i}$ is defined as (\ref{normal}) and $S_i[0,1]=\{x\in \mathbb{R}^{n_i}|\,\norm{x}=1\}$. Further, suppose $T:X\rightrightarrows \mathbb{R}^n$ is defined as, 
	\begin{equation}
		T(x)=\prod_{i\in \Lambda}T_i(x).\label{T}
	\end{equation} 
	Then, clearly the map $T$ is convex and compact valued map. 
	
	Now, we provide the sufficient conditions on the preference relations $P_i$ under which the map $T$ becomes upper semi-continuous and non-empty valued.
	\begin{lemma} \label{usc}
		Suppose $T:X\rightrightarrows \mathbb{R}^n$ is a multi-valued map as defined in (\ref{T}). If for each $i\in \Lambda$,
		\begin{itemize}
			\item[(a)] $P_i$ is lower semi-continuous map then $T$ is an upper semi-continuous map;
			\item[(b)] $P_i$ satisfies $x_i\notin co (P_i(x))$ for any $x\in X$ then $T$ admits non-empty values.
		\end{itemize}
	\end{lemma}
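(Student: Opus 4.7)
The plan is to reduce everything to the component maps, since $T(x)=\prod_{i\in\Lambda}T_i(x)$ is a finite Cartesian product, and both upper semi-continuity (with compact values) and non-emptiness are preserved under such products. So it suffices to establish each property for a single $T_i$, and the main tools will be the two parts of Proposition \ref{closed} together with standard preservation lemmas for convex hulls of semi-continuous multi-valued maps.

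For part (a), I would proceed in three steps. First, verify that $\tilde P_i=co(P_i)$ inherits lower semi-continuity from $P_i$: given an open set $U$ meeting $co(P_i(x))$, express a point of the intersection as a finite convex combination $\sum_{j}\lambda_j z_j$ with $z_j\in P_i(x)$; continuity of $(w_1,\ldots,w_k)\mapsto\sum_j\lambda_j w_j$ produces neighborhoods $U_j$ of $z_j$ whose $\lambda$-combinations remain in $U$, and lower semi-continuity of $P_i$ supplies nearby selections $w_j\in P_i(x')\cap U_j$ for every $x'$ in a suitable neighborhood of $x$. Second, apply Proposition \ref{closed}(b) to $\tilde P_i$ to obtain that $\mathcal N_{\tilde P_i}$ has closed graph; intersecting with the compact unit sphere $S_i[0,1]$ preserves closedness of the graph while confining the values to a fixed compact set, so closed graph plus global boundedness forces $G_i:=\mathcal N_{\tilde P_i}\cap S_i[0,1]$ to be upper semi-continuous with compact values. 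Third, invoke the classical fact that in $\mathbb{R}^{n_i}$ the convex hull of an u.s.c.\ map with compact values is again u.s.c.\ with compact convex values, yielding u.s.c.\ of $T_i=co(G_i)$ and, by the product argument, of $T$.

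For part (b), the hypothesis gives $x_i\notin co(P_i(x))=\tilde P_i(x)$ for every $x\in X$, and $\tilde P_i(x)$ is convex by construction. Proposition \ref{closed}(a) therefore produces some $0\neq y_i^*\in \mathcal N_{\tilde P_i}(x_{-i},x_i)$. Since the normal cone is itself a cone, the normalized vector $y_i^*/\norm{y_i^*}$ lies in $\mathcal N_{\tilde P_i}(x_{-i},x_i)\cap S_i[0,1]$, so this intersection, and hence $T_i(x)$, is non-empty; taking the product over $i\in\Lambda$ concludes.

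The main obstacle is assembling the chain of preservation results in part (a) without circular reasoning: specifically the passage from lower semi-continuity of $P_i$ to lower semi-continuity of $co(P_i)$, the deduction of upper semi-continuity from a closed graph once values are confined to a fixed compact set, and the preservation of u.s.c.\ under the convex hull operation in finite dimension. Each of these is classical (see e.g.\ Aubin--Frankowska) but must be stated or cited so that the lemma's proof remains self-contained.
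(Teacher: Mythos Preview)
Your proposal is correct and follows essentially the same route as the paper: for part (a) both arguments pass from lower semi-continuity of $P_i$ to that of $\tilde P_i=co(P_i)$ (the paper cites \cite[Theorem~5.9]{rockafellar} where you sketch the direct convex-combination argument), then use Proposition~\ref{closed}(b) to get a closed graph for $\mathcal N_{\tilde P_i}$, and finally exploit that $T_i(x)\subseteq \bar B_i(0,1)$ to upgrade closedness to upper semi-continuity before taking the product; for part (b) your normalization argument via Proposition~\ref{closed}(a) is exactly the paper's. The only cosmetic difference is that you factor the last step of (a) through the intermediate map $G_i=\mathcal N_{\tilde P_i}\cap S_i[0,1]$ and then take its convex hull, while the paper asserts directly that $T_i$ has closed graph---your decomposition is a little more explicit but not a different method.
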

	\begin{proof}
		\begin{itemize}
			\item[(a)] Suppose $i\in \Lambda$ is arbitrary. Then the map $\tilde P$ is lower semi-continuous according to \cite[Theorem 5.9]{rockafellar}. It is easy to observe that $\mathcal{N}_{\tilde P_i}:X_{-i}\times X_i\rightrightarrows \mathbb R^{n_i}$ is a closed map as per Proposition \ref{closed} (b). Hence, the map $T_i:X \rightrightarrows \mathbb{R}^{n_i}$ defined as (\ref{Ti}) is also closed. Finally, the fact that $T_i(x)\subseteq \bar B_i(0,1)$ for each $x\in X$ implies $T_i$ is u.s.c. map. This leads us to the conclusion $T=\prod_{i\in \Lambda} T_i$ is u.s.c. map.
			\item[(b)] Suppose $x\in X$ is arbitrary. According to Proposition \ref{closed}, the map $\mathcal{N}_{\tilde P_i}:X_{-i}\times X_i\rightrightarrows \mathbb R^{n_i}$ satisfies $\mathcal{N}_{\tilde P_i}(x_{-i},x_i)\setminus \{0\}\neq \emptyset$. Suppose $z_i^*\in \mathcal{N}_{\tilde P_i}(x_{-i},x_i)\setminus \{0\}$. Then, clearly $\frac{z_i^*}{\norm{z_i^*}}\in \mathcal{N}_{\tilde P_i}(x_{-i},x_i) \cap S_i[0,1]$. Hence, $T(x)=\prod_{i\in \Lambda} T_i(x)\neq \emptyset$ for any $x\in X$.
		\end{itemize}
	\end{proof}
	
	Now, we provide the sufficient conditions under which any solution of a variational inequality solves the jointly convex GNEP, $\Gamma'=(\mathcal{X}, K_i, P_i)_{i\in \Lambda}$.
	\begin{theorem}\label{VI}
		Assume that $\mathcal X\subseteq \mathbb{R}^n$ is non-empty and the map $P_i:X\rightrightarrows X_i$ has open upper sections for any $i\in \Lambda$. Suppose the map $T:X \rightrightarrows \mathbb{R}^n$ is defined as (\ref{T}) and the map $K_i:X \rightrightarrows X_i$ is defined as (\ref{constraint}). Then, any solution of $VI(T,\mathcal X)$ is an equilibrium for $\Gamma'=(\mathcal{X}, K_i, P_i)_{i\in \Lambda}$.
	\end{theorem}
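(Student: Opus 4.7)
The plan is to show the two equilibrium conditions directly from the $VI(T,\mathcal{X})$ inequality. Let $\bar x\in \mathcal X$ solve $VI(T,\mathcal{X})$ and pick $\bar x^*=(\bar x_1^*,\dots,\bar x_N^*)\in T(\bar x)=\prod_{i\in\Lambda}T_i(\bar x)$ such that $\langle \bar x^*,z-\bar x\rangle\geq 0$ for every $z\in \mathcal X$. For the first equilibrium condition, since $\bar x=(\bar x_{-i},\bar x_i)\in \mathcal X$, the definition (\ref{constraint}) gives $\bar x_i\in K_i(\bar x)$ for every $i\in\Lambda$; this part is essentially immediate.

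The bulk of the argument is the incompatibility $P_i(\bar x)\cap K_i(\bar x)=\emptyset$, which I would prove by contradiction. Assume some $y_i\in P_i(\bar x)\cap K_i(\bar x)$. The point $(\bar x_{-i},y_i)$ lies in $\mathcal X$ by (\ref{constraint}), so testing the VI inequality against $z=(\bar x_{-i},y_i)$ kills every coordinate except the $i$-th and leaves
\begin{equation*}
\langle \bar x_i^*,y_i-\bar x_i\rangle\geq 0.
\end{equation*}
The goal is to contradict this by showing the left-hand side is strictly negative.

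For this I would invoke Proposition \ref{relation} applied to $\tilde P_i=co(P_i)$. The hypothesis that $P_i$ has open upper sections means $P_i(\bar x)$ is open, and the convex hull of an open subset of $\mathbb R^{n_i}$ is open, so $\tilde P_i(\bar x)=co(P_i(\bar x))$ is open and convex. Since $y_i\in P_i(\bar x)\subseteq \tilde P_i(\bar x)$, Proposition \ref{relation} yields $\langle z^*,y_i-\bar x_i\rangle<0$ for every nonzero $z^*\in \mathcal{N}_{\tilde P_i}(\bar x)$. Now by construction $\bar x_i^*\in T_i(\bar x)=co(\mathcal{N}_{\tilde P_i}(\bar x)\cap S_i[0,1])$, so write $\bar x_i^*=\sum_k \lambda_k z_k^*$ as a convex combination with $\lambda_k\geq 0$, $\sum_k\lambda_k=1$, and each $z_k^*\in \mathcal{N}_{\tilde P_i}(\bar x)\cap S_i[0,1]$ in particular nonzero. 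Averaging the strict inequalities $\langle z_k^*,y_i-\bar x_i\rangle<0$ with the weights $\lambda_k$ gives $\langle \bar x_i^*,y_i-\bar x_i\rangle<0$, contradicting the VI estimate above.

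The only non-bookkeeping step is the openness of $co(P_i(\bar x))$, which is needed to apply Proposition \ref{relation}; this is standard in finite-dimensional spaces because $co(U)=\bigcup_{t\in\Delta}(t_1 U+\cdots+t_k U)$ is a union of translates of scalar multiples of the open set $U$. Everything else is a direct unpacking of the definitions of $T$, $T_i$, $\mathcal{N}_{\tilde P_i}$, and the VI, so I expect no other substantive obstacle.
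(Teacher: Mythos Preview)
Your proposal is correct and follows essentially the same route as the paper: assume a bad index $i$ with $y_i\in P_i(\bar x)\cap K_i(\bar x)$, test the VI against $(\bar x_{-i},y_i)$, and use Proposition~\ref{relation} on the open convex set $\tilde P_i(\bar x)=co(P_i(\bar x))$ together with the convex-combination structure of $T_i$ to force a contradiction. The only difference is cosmetic: the paper first deduces $\tilde x_i^*=0$ and then argues separately that $0\notin co(\mathcal N_{\tilde P_i}(\tilde x)\cap S_i[0,1])$ via a cone argument, whereas you obtain $\langle \bar x_i^*,y_i-\bar x_i\rangle<0$ directly by averaging the strict inequalities for the unit vectors $z_k^*$, which is a slight streamlining of the same idea.
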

	\begin{proof}
		Suppose $\tilde x\in \mathcal X$ solves $VI(T,\mathcal X)$ then,
		\begin{equation}\label{equ}
			\exists~\tilde x^*\in T(\tilde x), \langle \tilde x^*, y-\tilde x\rangle \geq 0~\text{for all}~y\in \mathcal X.
		\end{equation}
		Suppose $\tilde x$ is not a solution of $\Gamma$, then there exists $i\in \Lambda$ such that $P_i(\tilde x) \cap K_i(\tilde x)\neq \emptyset$. Suppose $z_i\in P_i(\tilde x) \cap K_i(\tilde x)$,
		then $y=(\tilde x_{-i},z_i)\in \mathcal X$ according to (\ref{constraint}). Hence, from (\ref{equ}) one can obtain,
		\begin{equation} \label{ineq}
			\langle \tilde x^*_i, z_i-\tilde x_i\rangle\geq 0.
		\end{equation}  
		Since $z_i\in \tilde P_i(\tilde x)$, we observe that $\tilde x_i^*=0$ by virtue of Proposition \ref{relation} as $\tilde x_i^*\in co(\mathcal N_{\tilde P_i}(\tilde x)\cap S_i[0,1])\subseteq \mathcal N_{\tilde P_i}(\tilde x)$ fulfills (\ref{ineq}). 
		
		Again, using the fact that $\tilde x_i^*\in co(\mathcal N_{\tilde P_i}(\tilde x)\cap S_i[0,1])$, one can obtain $x_{1i}^*,\cdots, x_{ri}^*\in \mathcal N_{\tilde P_i}(\tilde x)\cap S_i[0,1]$ with $\lambda_1,\cdots, \lambda_r\in [0,1]$ satisfying $\sum_{k=1}^{r}\lambda_k=1$ and,
		$$0= \tilde x_i^*=\sum_{k=1}^{r}\lambda_k x_{ki}^*.$$
		Suppose we have $k_\circ\in \{1,\cdots, r\}$ with $\lambda_{k_\circ} >0$. Then,
		$$ -x_{k_\circ i}^*=\sum_{k\neq k_\circ,k=1}^{r} \frac{\lambda_k}{\lambda_{k_\circ}} x^*_{ki}.$$
		We get $-x_{k_\circ i}^*\in \mathcal{N}_{\tilde P_i}(\tilde x)$ as it is a convex cone. Hence, $x_{k_\circ i}^*\in \mathcal{N}_{\tilde P_i}(\tilde x)\cap -\mathcal{N}_{\tilde P_i}(\tilde x)$. However, $\tilde P_i(\tilde x)$ is an open set due to our assumptions on $P_i$ and it also contains $z_i$. Therefore, $\mathcal{N}_{\tilde P_i}(\tilde x)\cap -\mathcal{N}_{\tilde P_i}(\tilde x)=\{0\}=\{x_{k_\circ i}^*\}$ by \cite[Lemma 2.1]{cotrina}. But, this contradicts the fact that $x_{k_\circ i}^*\in S_i[0,1]$. Consequently, our hypothesis is false and $\tilde x$ becomes an equilibrium for the game $\Gamma'$.
	\end{proof}
	Finally, we provide the sufficient conditions required for the occurrence of equilibrium for jointly convex GNEP $\Gamma'=(\mathcal{X}, K_i, P_i)_{i\in \Lambda}$ by employing the above characterization of $\Gamma'$ in terms of variational inequality.  
	\begin{theorem}\label{existence}
		Suppose $\mathcal X$ is a non-empty closed convex subset of $\mathbb{R}^n$ and $K_i:X\rightrightarrows X_i$ is defined as (\ref{constraint}). Then there exists a solution for $\Gamma'=(\mathcal{X}, K_i, P_i)_{i\in \Lambda}$ if,
		\begin{itemize}
			\item[(a)] for each $i\in \Lambda$, the map $P_i:X\rightrightarrows X_i$ is lower semi-continuous with open upper sections satisfying $x_i\notin co(P_i(x))$ for any $x\in \mathbb{R}^{n}$; 
			\item[(b)] there exists $\rho>0$ such that for each $x\in \mathcal{X}\setminus \bar B(0,\rho)$ a vector $z\in \mathcal{X}$ exists in such a way that $\norm{z}<\norm{x}$ and $z_i\in P_i(x)$ for each $i\in \Lambda$;
			\item[(c)] there exists $\rho'>\rho$ such that $\mathcal X\cap \bar B(0,\rho')\neq \emptyset$. 
		\end{itemize}  
	\end{theorem}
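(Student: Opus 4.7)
The strategy is to reduce the equilibrium question to solving $VI(T,\mathcal X)$ via Theorem \ref{VI}, and to handle the possibly unbounded set $\mathcal X$ through a truncation argument made effective by the coercivity assumption (b). Observe first that hypothesis (a) furnishes exactly what Lemma \ref{usc} requires: each $P_i$ is lower semi-continuous and $x_i \notin co(P_i(x))$, so $T$ is upper semi-continuous on $X$ with non-empty convex values; since $T_i(x) \subseteq \bar B_i(0,1)$ by construction, the values of $T$ are also compact.

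Next I would truncate. By (c), the set $\mathcal X_{\rho'} := \mathcal X \cap \bar B(0,\rho')$ is non-empty; being the intersection of a closed convex set with a closed ball, it is compact and convex. Applying a set-valued Stampacchia existence result (obtainable from Kakutani--Fan--Glicksberg via the usual fixed-point reformulation of the variational inequality) to the u.s.c., non-empty convex compact-valued operator $T$ on the compact convex set $\mathcal X_{\rho'}$, I obtain $\bar x \in \mathcal X_{\rho'}$ and $\bar x^* \in T(\bar x)$ with $\langle \bar x^*, y - \bar x\rangle \ge 0$ for every $y \in \mathcal X_{\rho'}$.

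The heart of the proof is then to show $\|\bar x\| \le \rho$, which would place $\bar x$ strictly inside the truncating ball and allow the inequality to be propagated to all of $\mathcal X$. Suppose for contradiction that $\|\bar x\| > \rho$. Condition (b) yields $z \in \mathcal X$ with $\|z\| < \|\bar x\| \le \rho'$ (hence $z \in \mathcal X_{\rho'}$) and $z_i \in P_i(\bar x) \subseteq \tilde P_i(\bar x)$ for every $i$, so that $\sum_i \langle \bar x_i^*, z_i - \bar x_i\rangle \ge 0$. Because each $\tilde P_i(\bar x) = co(P_i(\bar x))$ is the convex hull of an open set, it is itself open and convex, and Proposition \ref{relation} forces $\langle \bar x_i^*, z_i - \bar x_i\rangle < 0$ whenever $\bar x_i^* \neq 0$; combining this with the summed inequality $\ge 0$ forces $\bar x_i^* = 0$ for every $i \in \Lambda$. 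Rerunning the convex-combination argument from the proof of Theorem \ref{VI} (writing $0 = \bar x_i^* = \sum_k \lambda_k x_{ki}^*$ with $x_{ki}^* \in \mathcal N_{\tilde P_i}(\bar x) \cap S_i[0,1]$ and exploiting $\mathcal N_{\tilde P_i}(\bar x) \cap (-\mathcal N_{\tilde P_i}(\bar x)) = \{0\}$) produces the contradiction. Hence $\|\bar x\| \le \rho < \rho'$, and by convexity of $\mathcal X$ the point $\bar x + t(y - \bar x)$ lies in $\mathcal X_{\rho'}$ for every $y \in \mathcal X$ and all sufficiently small $t > 0$; testing the variational inequality on these points and dividing by $t$ promotes $\bar x$ to a solution of $VI(T,\mathcal X)$, from which Theorem \ref{VI} delivers the desired equilibrium of $\Gamma'$.

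The main obstacle I anticipate is the VI existence step on $\mathcal X_{\rho'}$: since $T$ is set-valued and merely u.s.c., the classical Hartman--Stampacchia theorem does not apply verbatim, and one must invoke a set-valued version (standardly obtained from Kakutani--Fan--Glicksberg). Beyond that, the argument is a careful bookkeeping of the coercivity bound and a direct reuse of the cone/sphere contradiction already underlying Theorem \ref{VI}.
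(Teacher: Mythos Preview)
Your proposal is correct and shares the paper's overall architecture: reduce to $VI(T,\mathcal X)$ via Theorem~\ref{VI}, invoke Lemma~\ref{usc} for the properties of $T$, solve the truncated problem on the compact convex set $\mathcal X_{\rho'}=\mathcal X\cap\bar B(0,\rho')$ by a set-valued Stampacchia/Kakutani argument, and then use the coercivity hypothesis~(b) to extend the inequality to all of $\mathcal X$. The divergence is only in how the extension is carried out. The paper never tries to pin the truncated solution $\tilde x$ inside $\bar B(0,\rho)$; instead it manufactures an auxiliary point $z^{\circ}\in\mathcal X\cap B(0,\rho')$ with $\langle x^{*},z^{\circ}-\tilde x\rangle\le 0$ for every $x^{*}\in T(\tilde x)$ (taking $z^{\circ}=\tilde x$ when $\|\tilde x\|$ is small, and when $\|\tilde x\|>\rho$ taking the point supplied by~(b) and using only the \emph{weak} normal-cone inequality $\langle x_i^{*},z_i^{\circ}-\tilde x_i\rangle\le 0$), then combines this with the truncated VI via a convex-combination trick. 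Your route instead forces $\|\bar x\|\le\rho$ by contradiction, which obliges you to invoke the strict inequality of Proposition~\ref{relation} and to replay the sphere/cone argument from the proof of Theorem~\ref{VI}. Both arguments are valid; the paper's is a little more economical (no strict inequality, no second use of the sphere argument), while yours delivers the extra conclusion that the equilibrium actually lies in $\bar B(0,\rho)$.
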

	\begin{proof}
	\textcolor{black}	{%Let us prove that there exists $\rho'>\rho$ such that $\mathcal{X}\cap \rho'\neq \emptyset$. Since, $\mathcal X$ is non-empty we have either $x\in \mathcal X\cap \bar B(0,\rho)$ or $x\in \mathcal X\setminus \bar B(0,\rho)$. If $x\in \mathcal X\setminus \bar B(0,\rho)$ then there exists $z\in \bar B(0,\norm{x})$. Hence, by taking $$
		In order to prove this result, it is enough to show that $VI(T,\mathcal X)$ admits a solution as per Theorem \ref{VI}. %In order to prove the occurrence of an equilibrium for $\Gamma'$, we first show that $VI(T,\mathcal X_{\rho'})$ admits a solution.
		 In the view of Lemma \ref{usc}, we know that $T$ is an u.s.c. map with non-empty convex compact values. Further, the set $\mathcal{X}_{\rho'}=\mathcal{X}\cap \bar B(0,\rho')$ is non-empty convex compact. %Hence, the map $T$ and the set $\mathcal X_{\rho'}$ fulfill all the assumptions in \cite[Theorem 9.9]{aubin}. 
		 Therefore, by using \cite[Theorem 9.9]{aubin} we obtain $\tilde x\in \mathcal X_{\rho'}$ and $\tilde x^*\in T(\tilde x)$ which satisfy,}
		\begin{equation}\label{VIeq}
			\langle \tilde x^*, y-\tilde x \rangle \geq 0~\text{for each}~y\in \mathcal{X}_{\rho'}.
		\end{equation}
		
		We intend to show that inequality (\ref{VIeq}) indeed holds for each $z\in \mathcal{X}$, that is,
		\begin{equation}\label{VIunbound}
			\langle \tilde x^*, z-\tilde x \rangle \geq 0~\text{for each}~z\in \mathcal{X}.
		\end{equation}
		In this regard, one can observe that there exists $z^\circ \in \mathcal X\cap B(0,\rho')$ such that,
		\begin{equation}\label{VIeqa}
			\langle x^*,z^\circ-\tilde x\rangle\leq 0~\text{for each}~x^*\in T(\tilde x).
		\end{equation}
		In fact, one can consider $z^\circ=\tilde x$ if $\norm{\tilde x}<\rho$. Otherwise, if $\tilde x=\rho'>\rho$ then we obtain some $z^\circ\in\mathcal{X}$ with $\norm{z^\circ}<\rho'$ and $z^\circ_i\in P_i(\tilde x)$ due to our hypothesis. We know that $\mathcal N_{P_i}(x_{-i},x_i)=\{x_i^*|\,\langle x_i^*, y_i-x_i\rangle\leq 0,~\forall\,y_i\in P_i(x_{-i},x_i)\}$ as per (\ref{normal}). Therefore, the fact that $z^\circ_i\in P_i(\tilde x)$ for any $i\in \Lambda$ implies,
		\begin{equation}
			\langle x_i^*, z^\circ_i-\tilde x_i\rangle \leq 0,~\text{for all}~x_i^*\in \mathcal N_{P_i}(\tilde x_{-i}, \tilde x_i).
		\end{equation}
		Since $T_i(\tilde x)\subseteq \mathcal{N}_{P_i}(\tilde x)$, we observe that (\ref{VIeqa}) holds true.
		
		To prove (\ref{VIunbound}), suppose $z\in \mathcal{X}\setminus \bar B(0,\rho')$ is arbitrary. Since $\mathcal X$ is convex and $z_\circ \in \mathcal X\cap B(0,\rho')$, we obtain some $t\in (0,1)$ such that $tz+(1-t)z_\circ\in \mathcal{X}_{\rho'}$. Hence, in the view of (\ref{VIeq}) we have,
		\begin{equation}\label{VIeqb}
			\langle \tilde x^*, tz+(1-t)z_\circ-\tilde x \rangle \geq 0.
		\end{equation}
		We observe that inequality (\ref{VIunbound}) follows on combining inequality (\ref{VIeqb}) with (\ref{VIeqa}). Finally, the vector $\tilde x\in\mathcal{X}$ which solves $VI(T,\mathcal{X})$ (\ref{VIeqb}) is an equilibrium for the jointly convex GNEP $\Gamma'$ by virtue of Theorem \ref{VI}.
		
	\end{proof}
	\begin{remark}\label{boundedVI}
		\begin{itemize}
			\item[(i)] If $\mathcal{X}$ is bounded, the coercivity criterion in hypotheses (b) and (c) becomes true by assuming that $\rho>\sup \{x|\,x\in \mathcal{X}\}$.
			\item[(ii)] The coercivity criterion assumed by us in the hypotheses (b) and (c) is motivated from the coercivity criterion for variational inequalities with unbounded constraint sets \cite[Theorem 2.1]{aussel-hadj}. 
			\item[(iii)] The coercivity criterion assumed by us in the hypothesis (b) can be interpreted as follows: there exists $\rho>0$ such that for any vector $x=(x_{-i},x_i)$ with the norm larger than $\rho>0$ there exists a vector $z=(z_{-i},z_i)$ whose norm is less than the norm of vector $x$ and any player $i$ prefers $z_i$ over $x_i$. It is easy to notice that the coercivity condition in hypothesis (b) is satisfied if the players can always find the preferable strategies in some bounded subset of $\mathcal{X}$. 
		\end{itemize}
	\end{remark}
	\subsection{GNEP with Preference maps and Quasi-variational Inequalities}
	In this section, we state the suitable conditions under which any solution of a QVI problem becomes a generalized Nash equilibrium for the GNEP having non-ordered non-convex inter-dependent preferences \cite{shafer,tian,yannelis}. Moreover, we investigate the sufficient conditions required for solvability of such games. %For this purpose, the map $T$ defined in (\ref{T}) plays a significant role.
	
	First of all, let us demonstrate that any solution of a quasi-variational inequality is an equilibrium for the considered generalized Nash game $\Gamma$  under suitable conditions.
	\begin{theorem}\label{equivalence}
		Assume that the following assumptions hold for any $i\in \Lambda$:
		\begin{itemize}
			\item [(a)] $K_i:X\rightrightarrows X_i$ admits non-empty values;
			\item[(b)] $P_i:X\rightrightarrows X_i$ has open upper section.
		\end{itemize}
		Suppose the map $T$ is defined as (\ref{T}) and the map $K:X\rightrightarrows X$ is defined as $K(x)=\prod_{i\in \Lambda} K_i(x)$. Then, any solution of $QVI(T,K)$ is an equilibrium for $\Gamma =(X_i,K_i,P_i)_{i\in \Lambda}$. 
	\end{theorem}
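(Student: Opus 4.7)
The plan is to adapt the argument from Theorem \ref{VI}, replacing the fixed joint constraint set $\mathcal X$ by the moving set $K(\tilde x)$ at the alleged QVI solution. The crucial observation is that, because $\tilde x$ lies in $K(\tilde x)$, for each player $j$ we automatically have $\tilde x_j \in K_j(\tilde x)$, which immediately delivers the first half of the equilibrium condition and also supplies the "free" coordinates of the test vectors needed to probe the inner-product inequality coordinatewise.

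First I would unfold the definition of $QVI(T,K)$: let $\tilde x \in K(\tilde x)$ be a solution, so there exists $\tilde x^* \in T(\tilde x) = \prod_{i \in \Lambda} T_i(\tilde x)$ with $\langle \tilde x^*, z - \tilde x\rangle \ge 0$ for every $z \in K(\tilde x) = \prod_{i \in \Lambda} K_i(\tilde x)$. The membership $\tilde x \in K(\tilde x)$ gives $\tilde x_i \in K_i(\tilde x)$ for each $i$, so only the preference-exclusion clause $P_i(\tilde x)\cap K_i(\tilde x)=\emptyset$ remains to be proved. I argue by contradiction: suppose some $i \in \Lambda$ admits $z_i \in P_i(\tilde x)\cap K_i(\tilde x)$. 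Then the vector $y:=(\tilde x_{-i},z_i)$ lies in $K(\tilde x)$, since $\tilde x_j\in K_j(\tilde x)$ for every $j\neq i$ and $z_i\in K_i(\tilde x)$. Plugging $y$ into the QVI inequality collapses the sum to a single coordinate and yields
\begin{equation*}
\langle \tilde x_i^*, z_i - \tilde x_i\rangle \ge 0,\qquad \tilde x_i^*\in T_i(\tilde x).
\end{equation*}

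The remainder of the argument runs exactly as in Theorem \ref{VI}. Since $P_i$ has open upper sections, $\tilde P_i(\tilde x)=co(P_i(\tilde x))$ is a convex open subset of $X_i$ containing $z_i$, and $T_i(\tilde x)=co(\mathcal N_{\tilde P_i}(\tilde x)\cap S_i[0,1])\subseteq \mathcal N_{\tilde P_i}(\tilde x)$ because the normal cone is a convex cone. Proposition \ref{relation} therefore forces $\tilde x_i^*=0$, for otherwise $\langle \tilde x_i^*, z_i-\tilde x_i\rangle <0$ would contradict the displayed inequality. Writing $0=\tilde x_i^* = \sum_{k=1}^r \lambda_k x_{ki}^*$ as a convex combination of unit-norm vectors $x_{ki}^*\in \mathcal N_{\tilde P_i}(\tilde x)\cap S_i[0,1]$ with some $\lambda_{k_\circ}>0$, one sees that $-x_{k_\circ i}^*$ also lies in the convex cone $\mathcal N_{\tilde P_i}(\tilde x)$, so $x_{k_\circ i}^*\in \mathcal N_{\tilde P_i}(\tilde x)\cap -\mathcal N_{\tilde P_i}(\tilde x)=\{0\}$ by \cite[Lemma 2.1]{cotrina}, which contradicts $\|x_{k_\circ i}^*\|=1$.

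The only genuinely new content compared to Theorem \ref{VI} is the construction of the admissible perturbation $y\in K(\tilde x)$, which is automatic once one uses $\tilde x\in K(\tilde x)$; assumption (a) on non-empty values of $K_i$ plays no direct role beyond making the equilibrium notion meaningful, and the open-upper-section assumption (b) is exactly what is needed to invoke Proposition \ref{relation}. I expect no serious obstacle: the main care is simply to verify that $y-\tilde x$ is supported on the $i$-th block, so that the QVI inequality localizes to the $i$-th coordinate and the Theorem \ref{VI} machinery applies verbatim.
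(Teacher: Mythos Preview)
Your proposal is correct and follows essentially the same approach as the paper's own proof: both argue by contradiction, choose $z_i\in P_i(\tilde x)\cap K_i(\tilde x)$, build the test vector $y=(\tilde x_{-i},z_i)\in K(\tilde x)$, localize the QVI inequality to coordinate $i$, apply Proposition~\ref{relation} to force $\tilde x_i^*=0$, and then reuse the convex-combination argument from Theorem~\ref{VI} to reach a contradiction with $x_{k_\circ i}^*\in S_i[0,1]$. The only cosmetic difference is that the paper phrases the negation as $co(P_i(\tilde x))\cap K_i(\tilde x)\neq\emptyset$ rather than $P_i(\tilde x)\cap K_i(\tilde x)\neq\emptyset$, but since $P_i(\tilde x)\subseteq co(P_i(\tilde x))=\tilde P_i(\tilde x)$ this makes no material difference.
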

	\begin{proof}
		Suppose $\tilde x$ solves $QVI(T,K)$ then $\tilde x\in K(\tilde x)$ satisfies,
		\begin{equation}\label{GNEPQVI}
			\exists~\tilde x^*\in T(\tilde x), \langle \tilde x^*, y-\tilde x\rangle \geq 0~\text{for all}~ y\in K(\tilde x).
		\end{equation}
		
		Let us assume that $\tilde x$ is not an equilibrium for the given generalized Nash game $\Gamma$. Then, $co(P_i(\tilde x))\cap  K_i(\tilde x)\neq \emptyset$ for some $i\in \Lambda$. Suppose $z_i\in co(P_i(\tilde x))\cap  K_i(\tilde x)$. Then, clearly $y=(\tilde x_{-i}, z_i)\in K(\tilde x)$ and from (\ref{GNEPQVI}) we obtain,
		\begin{equation}\label{rel}
			\langle \tilde x_i^*, z_i-\tilde x_i\rangle \geq 0.
		\end{equation}
		By virtue of Proposition \ref{relation}, one can observe that $\tilde x_i^*=0$ because $\tilde x_i^*\in T_i(\tilde x) \subseteq \mathcal N_{\tilde P_i}(\tilde x)$ fulfills (\ref{rel}). 
		
		Since $0=\tilde x_i^*\in co(\mathcal N_{\tilde P_i}(\tilde x)\cap S_i[0,1])$, one can obtain $x_{1i}^*,\cdots, x_{ri}^*\in \mathcal N_{\tilde P_i}(\tilde x)\cap S_i[0,1]$ with $\lambda_1,\cdots, \lambda_r\in [0,1]$ satisfying $\sum_{k=1}^{r}\lambda_k=1$ and $0= \tilde x_i^*=\sum_{k=1}^{r}\lambda_k x_{ki}^*.$ By following the argument presented in the proof of Theorem \ref{VI}, one can easily obtain a contradiction to the fact that $x_{ki}^*\in S_i[0,1]$ for each $ki$.
		
		%Again, using the fact that $\tilde x_i^*\in co(\mathcal N_{\tilde P_i}(\tilde x)\cap S_i[0,1])$, one can obtain $x_{1i}^*,\cdots, x_{ri}^*\in \mathcal N_{\tilde P_i}(\tilde x)\cap S_i[0,1]$ with $\lambda_1,\cdots, \lambda_r\in [0,1]$ satisfying $\sum_{k=1}^{r}\lambda_k=1$ and,
		%$$0= \tilde x_i^*=\sum_{k=1}^{r}\lambda_k x_{ki}^*.$$
		%Suppose we have $k_\circ\in \{1,\cdots, r\}$ with $\lambda_{k_\circ} >0$. Then,
		%$$ -x_{k_\circ i}^*=\sum_{k\neq k_\circ,k=1}^{r} \frac{\lambda_k}{\lambda_{k_\circ}} x^*_{ki}.$$
		%We get $-x_{k_\circ i}^*\in \mathcal{N}_{\tilde P_i}(\tilde x)$ as it is a convex cone. Hence, $x_{k_\circ i}^*\in \mathcal{N}_{\tilde P_i}(\tilde x)\cap -\mathcal{N}_{\tilde P_i}(\tilde x)$. However, $\tilde P_i(\tilde x)$ is an open set due to our assumptions on $P_i$ and it also contains $z_i$. Therefore, $\mathcal{N}_{\tilde P_i}(\tilde x)\cap -\mathcal{N}_{\tilde P_i}(\tilde x)=\{0\}=x_{k_\circ i}^*$ by \cite[Lemma 2.1]{cotrina}. But, this contradicts the fact that $x_{k_\circ i}^*\in S_i[0,1]$. Consequently, our hypothesis is false and $\tilde x$ becomes an equilibrium for $\Gamma$.
	\end{proof}
	
	%We study generalized Nash games in which the players have non-ordered inter-dependent preferences, unbounded constraint maps. 
	One can notice the practical importance of GNEP with non-ordered inter-dependent preferences and unbounded constraint maps by referring to \cite{yannelis,yannelis2022}, where the case of Walrasian equilibrium problems is recently considered. In an existing work on GNEP with non-ordered inter-dependent preferences by Tian \cite{tian}, the GNEP with unbounded strategy sets but compact valued strategy maps is studied. In particular, Tian \cite[Theorem 2]{tian} employed a coercivity condition to ensure the occurrence of equilibrium for such GNEP by using a fixed point result.
	
	Our aim is to derive the occurrence of equilibrium for GNEP in which the players have non-ordered inter-dependent preferences, unbounded strategy sets and non-compact valued strategy maps by using variational techniques. For this purpose, we will employ the following coercivity criterion: Suppose $P_i:X\rightrightarrows X_i$ and $K_i:X\rightrightarrows X_i$ denote the preference and strategy maps of player $i$, respectively. Let $K:X\rightrightarrows X$ be formed as $K(x)=\prod_{i\in \Lambda} K_i(x)$. Then, the coercivity criterion ($\mathcal{C}_x$) holds at any $x\in X$ if,
	\begin{align*}
		(\mathcal{C}_x): \enspace \exists\,\rho_x & >0~\text{such that}~\forall\,y\in K(x)\setminus\bar B(0,\rho_x),\exists\, z\in K(x)\\& \quad \text{with}~\norm{z} <\norm{y}~\text{and}~z_i\in P_i(y)~\forall\,i\in\Lambda.
	\end{align*}
	\begin{remark}
		\begin{itemize}
			\item[(i)] For any $x\in X$, if $K_i(x)$ is bounded for each $i\in \Lambda$ then the coercivity criterion ($\mathcal{C}_x$) is fulfilled by assuming that $\rho_x> \sup \{\norm{y}\,|\,y\in K(x)\}$.
			\item[(ii)] One can observe that the coercivity criterion ($\mathcal{C}_x$) is comparable with the existing coercivity criterion for GNEP having numerical representation of preferences (as described in Remark \ref{numGNEP}). In fact, on strengthening the \textit{coerciveness condition} (1) in \cite{cotrinatime} to following form: for any $x\in X$ there exists a non-empty convex compact subset $H_x$ such that,
			\begin{align}
				\text{for any}~y\in K(x)\setminus & H_x~\text{there exists}~ z\in K(x)\cap H_x~\text{satisfying,}\notag\\&u_i(y_{-i},z_i)> u_i(y_{-i},y_i);\label{numGNEP1}
			\end{align}
			we observe that ($\mathcal{C}_x$) is fulfilled if (\ref{numGNEP1}) holds. One can verify this by simply assuming $\rho_x>\sup\, \{y|\,y\in H_x\}$.   
		\end{itemize}
	\end{remark}
	
	Finally, we check the solvability of the generalized Nash equilibrium problem $\Gamma= (X_i,K_i,P_i)_{i\in\Lambda}$ by employing the characterization of $\Gamma$ in terms of quasi-variational inequality derived in Theorem \ref{equivalence}.
	\begin{theorem} \label{QVImainresult}
		Suppose that the following assumptions hold for any $i\in \Lambda$:
		\begin{itemize}
			\item[(a)] $X_i$ is a non-empty, closed and convex subset of $\mathbb{R}^{n_i}$;
			\item[(b)] $K_i$ is closed and lower semi-continuous map with $K_i(x)$ being non-empty convex for any $x\in X$;
			\item[(c)] $P_i$ is lower semi-continuous map with open upper sections and $x_i\notin co(P_i(x))$ for any $x\in X$.
		\end{itemize}
		Then, there exists an equilibrium for $\Gamma =(X_i,K_i,P_i)_{i\in\Lambda}$ if the coercivity criterion ($\mathcal{C}_x$) holds at any $x\in X$ and there exists $\rho>\sup\{\rho_x\,|\,x\in X\}$ in such a way that $(\prod_{i\in \Lambda} K_i(x))\cap \bar B(0,\rho)\neq \emptyset$ for each $x\in X$.
	\end{theorem}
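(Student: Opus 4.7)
The plan is to reduce to Theorem \ref{equivalence}: it suffices to produce a solution of $QVI(T, K)$, where $K(x) = \prod_{i \in \Lambda} K_i(x)$. The overall strategy mirrors that of Theorem \ref{existence}: truncate the constraint map to a closed ball of radius $\rho$, invoke a standard compact-valued QVI existence theorem, and then use the coercivity criterion $(\mathcal{C}_{\tilde x})$ to promote the resulting truncated solution to a solution of the original QVI.

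First I would verify the basic regularity. Hypothesis (c), together with $x_i \notin co(P_i(x))$, puts Lemma \ref{usc} into force, so $T$ is upper semi-continuous with non-empty convex compact values. Hypothesis (b), taken product-wise over $i \in \Lambda$, says that $K$ is closed and lower semi-continuous with non-empty closed convex values. Setting $X_\rho = X \cap \bar B(0,\rho)$, I would then define the truncation $K_\rho : X_\rho \rightrightarrows X_\rho$ by $K_\rho(x) = K(x) \cap \bar B(0, \rho)$. Non-emptiness, closedness and convexity of $K_\rho(x)$ are immediate from the second hypothesis on $\rho$ and the properties of $K$. For lower semi-continuity of $K_\rho$, I would first check that $K(x) \cap B(0,\rho) \neq \emptyset$ for every $x \in X_\rho$: if some $w \in K(x) \cap \bar B(0,\rho)$ lies on the boundary with $\|w\| = \rho > \rho_x$, then $(\mathcal{C}_x)$ supplies $z \in K(x)$ with $\|z\| < \rho$. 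A standard result on l.s.c.\ maps with convex values (intersection with an open ball preserves l.s.c., and the closed-ball version then follows by convexity of $K(x)$) transfers lower semi-continuity from $K$ to $K_\rho$. Since $X_\rho$ is non-empty, convex, and compact, a classical QVI existence theorem for u.s.c.\ $T$ and continuous closed-convex-valued constraint maps (e.g.\ \cite{chan}) produces $\tilde x \in K_\rho(\tilde x)$ and $\tilde x^* \in T(\tilde x)$ satisfying $\langle \tilde x^*, y - \tilde x \rangle \geq 0$ for every $y \in K_\rho(\tilde x)$.

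The remaining step is to extend this inequality from $K_\rho(\tilde x)$ to $K(\tilde x)$, following the template of Theorem \ref{existence}. I would first exhibit $z^\circ \in K(\tilde x) \cap B(0,\rho)$ with $\langle \tilde x^*, z^\circ - \tilde x \rangle \leq 0$: if $\|\tilde x\| < \rho$ take $z^\circ = \tilde x$; otherwise $\|\tilde x\| = \rho > \rho_{\tilde x}$, and applying $(\mathcal{C}_{\tilde x})$ to $\tilde x$ itself yields $z^\circ \in K(\tilde x)$ with $\|z^\circ\| < \rho$ and $z^\circ_i \in P_i(\tilde x)$ for each $i \in \Lambda$. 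Since $\tilde x_i^* \in T_i(\tilde x) \subseteq \mathcal{N}_{\tilde P_i}(\tilde x)$ and $z^\circ_i \in P_i(\tilde x) \subseteq \tilde P_i(\tilde x)$, the normal-cone inequality $\langle \tilde x_i^*, z^\circ_i - \tilde x_i \rangle \leq 0$ holds component-wise. Then for arbitrary $z \in K(\tilde x) \setminus \bar B(0,\rho)$, the convexity of $K(\tilde x)$ supplies a small $t \in (0,1)$ with $tz + (1-t)z^\circ \in K_\rho(\tilde x)$; plugging this into the truncated inequality and combining with $\langle \tilde x^*, z^\circ - \tilde x \rangle \leq 0$ yields $\langle \tilde x^*, z - \tilde x \rangle \geq 0$, exactly as in the closing computation of Theorem \ref{existence}. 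Hence $\tilde x$ solves $QVI(T, K)$, and Theorem \ref{equivalence} produces the desired equilibrium.

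The main technical obstacle I anticipate is establishing the lower semi-continuity of the truncated map $K_\rho$: l.s.c.\ is not generally preserved under intersection with a closed ball, and one has to lean precisely on the coercivity criterion to force the interior-ball intersection $K(x) \cap B(0,\rho)$ to be non-empty, after which the standard l.s.c.\ transfer argument for convex-valued maps becomes available.
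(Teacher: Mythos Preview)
Your proposal is correct and follows essentially the same route as the paper: truncate to $K_\rho(x)=K(x)\cap\bar B(0,\rho)$, use coercivity $(\mathcal{C}_x)$ to secure $K(x)\cap B(0,\rho)\neq\emptyset$ and hence lower semi-continuity of $K_\rho$, apply a standard QVI existence result on the compact set, and then extend via the convex-combination argument and Theorem~\ref{equivalence}. The paper cites \cite[Lemma~2.3]{cotrinaeq} for the l.s.c.\ transfer and \cite[Corollary (Theorem~3)]{tan} for the QVI existence step, where you invoke unspecified ``standard'' results and \cite{chan}; apart from these citations the arguments coincide.
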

	\begin{proof}
		In order to prove this result, it is enough to show that $QVI(T,K)$ admits a solution according to Theorem \ref{equivalence}. Suppose $X_\rho =X\cap \bar B(0,\rho)$ and the map $K_\rho:X_\rho\rightrightarrows X_\rho$ is formed as $K_\rho (x)= K(x)\cap \bar B(0,\rho)$ where $K(x)=\prod_{i\in \Lambda} K_i(x)$. %Suppose the map $T:X_\rho \rightrightarrows \mathbb{R}^n$ is defined as (\ref{T}). To prove the above result, we first show that $QVI(T, K_\rho)$ admits at least one solution. 
		We claim that $K_\rho$ is a lower semi-continuous map. %$K(x)\cap B(0,r)\neq \emptyset$ for any $x\in X$. 
		In fact, one can observe that $K(x)\cap \bar B(0,\rho)\neq \emptyset$ as per our hypothesis. Hence, there exists some $z\in K(x)\cap B(0,\rho)$ due to coercivity criterion ($\mathcal{C}_x$). Then, we observe that $K_\rho$ is lower semi-continuous map as per \cite[Lemma 2.3]{cotrinaeq}. Consequently, $K_\rho$ is closed and lower semi-continuous map and $K_\rho (x)$ is non-empty convex compact for any $x\in X$.
		
		According to Lemma \ref{usc}, $T:X_\rho \rightrightarrows \mathbb{R}^n$ is a u.s.c. map with non-empty convex and compact values. As per \cite[Corollary(Theorem 3)]{tan}, we obtain some $\tilde x\in K_\rho (\tilde x)$ such that,
		\begin{equation}\label{coerc}
			\text{there exists}~\tilde x^*\in T(\tilde x),\langle\tilde x^*,y-\tilde x\rangle \geq 0,~\text{for all}~y\in K_\rho(\tilde x).
		\end{equation}
		We claim that $\tilde x$ is a solution of $QVI(T,K)$, that is,
		\begin{equation}\label{coere}
			\text{there exists}~\tilde x^*\in T(\tilde x),\langle\tilde x^*,y-\tilde x\rangle \geq 0,~\text{for all}~y\in K(\tilde x).
		\end{equation}
		
		In this regard, we first show that there exists $z\in K(x)\cap B(0,\rho)$ such that,
		\begin{equation}\label{coera}
			\langle x^*,z-\tilde x\rangle \leq 0,~\text{for all}~x^*\in T(\tilde x).
		\end{equation}
		Since $\tilde x\in K_\rho(\tilde x)$, one can observe that (\ref{coera}) holds true if $\norm{\tilde x}<\rho$. On the other hand, in the case $\norm{\tilde x}=\rho$ then $\tilde x\in K(\tilde x)\setminus \bar B(0,\rho_{\tilde x})$. Hence, there exists $z\in K(\tilde x)$ with $\norm z<\rho$ and $z_i\in P_i(\tilde x)$ for all $i\in \Lambda$. Then, for any $i\in \Lambda$ we obtain, 
		\begin{equation}\label{coerb}
			\langle x_i^*, z_i-\tilde x_i\rangle \leq 0,~\text{for all}~x_i^*\in \mathcal N_{P_i}(\tilde x_{-i},\tilde x_i)
		\end{equation}
		by definition of $\mathcal N_{P_i}$ (see (\ref{normal})). Since $T_i(\tilde x)\subseteq \mathcal N_{P_i}(\tilde x)$, we observe that (\ref{coera}) holds true. Finally, for any $y\in K(\tilde x)\setminus\bar B(0,\rho)$ we obtain some $t\in(0,1)$ such that $ty+(1-t)z\in K(\tilde x)\cap \bar B(0,r)$ by using convexity of $K(\tilde x)$. Hence, in the view of (\ref{coerc}) we observe,
		\begin{equation}\label{coerd}
			\langle\tilde x^*,ty+(1-t)z-\tilde x \rangle\geq 0.
		\end{equation}
		We observe that inequality (\ref{coere}) follows on combing (\ref{coerd}) with (\ref{coera}). Finally, the vector $\tilde x$ which solves $QVI(T,K)$ (\ref{coere}) is an equilibrium for the given GNEP $\Gamma$ by virtue of Theorem \ref{equivalence}. %In this way, we affirm the occurrence of generalized Nash equilibrium $\tilde x$.
	\end{proof}
\textcolor{black}{We observe that the coercivity condition $(\mathcal{C}_x)$ holds for any $x\in X$ if $X_i$ is bounded for any $i\in \Lambda$. Hence, we obtain following result as a corollary of Theorem \ref{QVImainresult}.
\begin{corollary}\label{QVIcoro}
	Suppose that the following assumptions hold for any $i\in \Lambda$:
	\begin{itemize}
		\item[(a)] $X_i$ is a non-empty, compact and convex subset of $\mathbb{R}^{n_i}$;
		\item[(b)] $K_i$ is closed and lower semi-continuous map with $K_i(x)$ being non-empty convex for any $x\in X$;
		\item[(c)] $P_i$ is lower semi-continuous map with open upper sections and $x_i\notin co(P_i(x))$ for any $x\in X$.
	\end{itemize}
	Then, there exists an equilibrium for $\Gamma =(X_i,K_i,P_i)_{i\in\Lambda}$.
\end{corollary}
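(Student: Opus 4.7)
The plan is to obtain Corollary \ref{QVIcoro} as a direct application of Theorem \ref{QVImainresult}; the entire argument reduces to verifying that compactness of each $X_i$ automatically supplies the coercivity data required there. I would begin by observing that $X = \prod_{i \in \Lambda} X_i$ is compact, so $M := \sup\{\norm{y} : y \in X\}$ is finite. Choosing the uniform value $\rho_x = M + 1$ for every $x \in X$, the inclusion $K(x) \subseteq X \subseteq \bar B(0, M)$ forces $K(x) \setminus \bar B(0, \rho_x) = \emptyset$, so the coercivity criterion $(\mathcal{C}_x)$ is vacuously satisfied at every $x \in X$.

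Next, I would set $\rho = M + 2$, which exceeds $\sup\{\rho_x : x \in X\} = M + 1$. Hypothesis (b) of the corollary ensures $K_i(x) \neq \emptyset$ for every $i$ and $x$, hence $K(x) = \prod_{i \in \Lambda} K_i(x)$ is non-empty; combined with $K(x) \subseteq \bar B(0, M) \subseteq \bar B(0, \rho)$, this gives $K(x) \cap \bar B(0, \rho) = K(x) \neq \emptyset$, which is precisely the remaining structural condition required by Theorem \ref{QVImainresult}.

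With these two verifications in place, every hypothesis of Theorem \ref{QVImainresult} holds: the assumptions (a), (b), (c) of the corollary coincide with those of the theorem (compactness combined with convexity is strictly stronger than the closed-and-convex hypothesis on $X_i$), and the coercivity ingredients have just been constructed. Applying Theorem \ref{QVImainresult} then yields an equilibrium for $\Gamma = (X_i, K_i, P_i)_{i \in \Lambda}$. There is no genuine obstacle in the argument; the only subtle point worth flagging is the \emph{uniform} choice of $\rho_x$, which makes the inequality $\rho > \sup_x \rho_x$ in the theorem trivially attainable and is exactly where compactness of the whole strategy set $X$ (rather than merely boundedness of each $K(x)$) is used.
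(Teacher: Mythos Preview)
Your proposal is correct and follows essentially the same approach as the paper: the paper derives the corollary directly from Theorem~\ref{QVImainresult} by noting (in the sentence preceding the corollary and in Remark~3.2(i)) that boundedness of the $X_i$ renders the coercivity criterion $(\mathcal{C}_x)$ vacuously true. Your write-up simply makes explicit the uniform choices $\rho_x = M+1$ and $\rho = M+2$ that the paper leaves implicit.
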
}
	\begin{remark}
		\begin{itemize}
			\item[(i)] One can observe that \cite[Theorem 1]{shafer} and \cite[Theorem 1]{tian} follows from above-stated result Corollary \ref{QVIcoro}. 
			\item[(ii)] The coercivity criterion assumed by us in the above theorem is motivated from the coercivity criterion for quasi-variational inequalities with unbounded constraint maps in \cite{ausselcoer}.
		\end{itemize}
	\end{remark}
	
	\section{GNEP with Preference Maps in terms of Binary Relations}\label{secbinary}
	
	In this section, we establish the existence of equilibrium for the GNEP in which the preferences of consumers are expressed in terms of binary relations. %Further, we compare the results established by us with some relevant results present in the literature. 
	Let us assume that the binary relation `$\succ_i$' characterizes the preferences of $i^{th}$ player over the set $X$. For any given strategy vector $x_{-i}\in \prod_{j\in (\Lambda\setminus\{i\})} X_j$ of rival players, whenever the consumer strictly prefers some strategy $y_i\in X_i$ over $x_i$, we represent this situation as $(x_{-i},y_i)\succ_i (x_{-i},x_i)$ \cite{bade}. Clearly, the preference map $P_i:X\rightrightarrows X_i$ can be defined in terms of binary relation as,
	\begin{equation}\label{pref}
		P_i(x_{-i},x_i)=\{y_i\in X_i\,|\, (x_{-i},y_i)\succ_i (x_{-i},x_i)\}.
	\end{equation} 
	Moreover, the definition discussed in Section 3 can be given in terms of binary relations as follows: Suppose the $N$-ordered triples $\Gamma=(X_i,K_i,\succ_i)_{i\in \Lambda}$ represents the generalized Nash game defined by preference relations. A vector $\tilde x\in X$ is known as equilibrium for the game $\Gamma$ if $\tilde x\in K_i(\tilde x)$ and for each $i\in \Lambda$ there is no such action $y_i\in K_i(\tilde x)$ such that $(\tilde x_{-i},y_i)\succ_i (\tilde x_{-i},\tilde x_i)$ \cite{bade}.
	
	Let us recollect some important properties of preference relation from \cite{kreps}.
	\begin{definition}
		Suppose $\succ_i$ denotes the preference relation over the set $X$ \cite{debreubook,krepsweak,milasipref2021}. Then $\succ_i$ is known as,
		\begin{itemize}
			\item[(a)] irreflexive if there is no such $x\in X$ for which we have $x\succ_i x$;
			\item[(b)] asymmetric if there is no $x,y\in X$ such that $x\succ_i y$ and $y\succ_i x$;
			\item[(c)] negatively transitive if for any $x,y,z \in X$ we have $x\succ_i y$, then either $x\succ_i z$ or $z\succ_i y$, or both.
		\end{itemize}
	\end{definition}
	It is well known that strict preference relation $\succ_i$ induces a weak preference relation `$\succeq_i$' \cite{milasipref2021, kreps}. For any $x,y\in X$ we have $x\succeq_i y$, that is, player $i$ weakly prefers $x$ over $y$ if $y\succ_i x$ is not the case. Furthermore, if the  strict preference relation $\succ_i$ is asymmetric and negatively transitive then the corresponding weak preference relation, 
	\begin{itemize}
		\item[(a)] $\succeq_i$ is complete: For any $x,y\in X$ either $x\succeq_i y$ or $y\succeq_i x$;
		\item[(b)] $\succeq_i$ is transitive: For any $x,y,z\in X$ if $x\succeq_i z$ and $z\succeq_i y$ then $x\succeq_i y$,
	\end{itemize}
	respectively. We say that the preference relation $\succeq_i$ (or $\succ_i$) is non-ordered if it is not complete and transitive (it is not asymmetric and negatively transitive). 
	Let us discuss the continuity and convexity properties of preference relations which play an important role in establishing the existence of equilibrium for the proposed games. 
	\begin{itemize}
		\item[(a)] $\succ_i$ is said to be l.s.c. if the set $\{x\in X\,|\, x\succ_i y\}$ is open for any $y\in X$ and it is said to be u.s.c. if the set $\{x\in X\,|\, y\succ_i x\}$ is open for any $y\in X$;
		\item[(b)] $\succ_i$ is said to be continuous if it is both u.s.c. and l.s.c. Alternatively, `$\succ_i$' is continuous iff the induced weak preference relation `$\succeq_i$' satisfies $x_n\succeq_i y_n,~\forall n\implies x\succeq_i y$ for any sequences $\{x_n\}$ and $\{y_n\}$ converging to $x$ and $y$ respectively;
		\item[(c)] $\succ_i$ is convex if for any $x,y,z\in X$ satisfying $x\succ_i z$ and $y\succ_i z$ we have $tx+(1-t)y\succ_i z$ for each $t\in (0,1)$;
		\item[(d)] $\succ_i$ is said to be non-satiated if for any $x\in X$ there exists $z\in X$ satisfying $z\succ_i x$.
	\end{itemize}
	Following result will be useful in proving the upcoming existence result on generalized Nash equilibrium problems with preference maps in terms of binary relations:
	\begin{proposition} \label{binary}
		Assume that $\succ_i$ is the strict preference relation defined over $X$ for player $i$. Suppose the map $P_i:X\rightrightarrows X_i$ be defined as (\ref{pref}). Then,
		\begin{itemize}
			\item[(a)] $x_i\notin P_i(x)$ for any $x\in X$ if $\succ_i$ is irreflexive;
			\item[(b)] $P_i(x)\neq \emptyset$ for any $x\in X$ if $\succ_i$ is non-satiated on $X$;
			\item[(c)] $P_i$ admits open upper sections if $\succ_i$ is lower semi-continuous;
			\item[(d)] $P_i$ is lower semi-continuous if $\succ_i$ is continuous.
		\end{itemize}
	\end{proposition}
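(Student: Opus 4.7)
The plan is to dispatch the four parts in order of increasing substance. Parts (a)--(c) unpack directly from the definition of $P_i$ in (\ref{pref}) together with the respective hypothesis on $\succ_i$; part (d) is the only step needing a genuine argument.

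For (a), I would suppose $x_i\in P_i(x)$ and obtain $(x_{-i},x_i)\succ_i (x_{-i},x_i)$ from (\ref{pref}), directly contradicting irreflexivity at the point $(x_{-i},x_i)\in X$. For (b), I would apply the non-satiation property at the point $(x_{-i},x_i)$ (in the sectional sense consistent with the definition of $P_i$) to produce a witness strictly preferred given rivals' strategies $x_{-i}$, whose $i$-th coordinate then lies in $P_i(x)$. For (c), I would write
\[
P_i(x_{-i},x_i)=\iota_{x_{-i}}^{-1}\bigl(\{z\in X\,|\,z\succ_i(x_{-i},x_i)\}\bigr),
\]
where $\iota_{x_{-i}}:X_i\to X$ is the continuous affine embedding $y_i\mapsto(x_{-i},y_i)$. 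By the lower semi-continuity of $\succ_i$ the set inside the parentheses is open in $X$, so its preimage is open in $X_i$, giving the open upper section property.

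For (d), I would use the standard sequential characterization of lower semi-continuity for multi-valued maps between Euclidean subsets: $P_i$ is l.s.c.\ at $x\in X$ iff for every $x^n\to x$ and every $y_i\in P_i(x)$, there exist $y_i^n\in P_i(x^n)$ for all large $n$ with $y_i^n\to y_i$. The crucial upgrade is from the two sectional openness axioms (l.s.c. and u.s.c. of $\succ_i$) to joint openness in $X\times X$ of the graph
\[
G=\{(u,v)\in X\times X\,|\,u\succ_i v\}.
\]
This is exactly what the alternative characterization of continuity stated in the excerpt delivers: the weak preference condition $x_n\succeq_i y_n\Rightarrow x\succeq_i y$ for convergent sequences says that the complement of $G$ is sequentially closed in $X\times X$, hence closed, so $G$ is open. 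Given $y_i\in P_i(x)$ and $x^n\to x$, the pair $((x_{-i}^n,y_i),x^n)$ converges to $((x_{-i},y_i),x)\in G$; therefore it eventually lies in $G$, which is precisely $y_i\in P_i(x^n)$ for large $n$. The constant sequence $y_i^n=y_i$ then witnesses the required lower semi-continuity.

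The only real obstacle is the joint-openness step in (d); everything else is a one-line unpacking of the relevant definitions. I expect the sequential characterization of continuity provided in the preceding discussion to be exactly the right tool to bridge from sectional continuity of $\succ_i$ to openness of the graph of the strict preference, after which the sequential l.s.c.\ of $P_i$ follows by taking the constant sequence in $X_i$.
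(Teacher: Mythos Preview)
Your proposal is correct and follows essentially the same approach as the paper: parts (a)--(c) are immediate from the definitions (the paper also dispatches them in one line each, using a sequential phrasing for (c) where you use the preimage formulation), and for (d) both you and the paper rely on the alternative characterization of continuity of $\succ_i$ as sequential closedness of $\succeq_i$. The only cosmetic difference is that the paper argues (d) by contradiction from the open-set definition of lower semi-continuity, whereas you first observe that the strict-preference graph $G\subset X\times X$ is open and then verify the sequential l.s.c.\ criterion directly with the constant sequence $y_i^n=y_i$; the mathematical content is identical.
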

	\begin{proof}
		\begin{itemize}
			\item[(a)] Since $\succ_i$ is irreflexive, we can not have $(x_{-i},x_i)\succ_i(x_{-i},x_i)$ for any $x\in X$. Hence, $x_i\notin P_i(x)$ for any $x\in X$.
			\item[(b)] It follows from definition directly.
			\item[(c)] Suppose $x\in X$ is arbitrary. If $P_i(x)=\emptyset$ then the case follows trivially. Suppose $y_i\in P_i(x)$ then we claim that for any sequence $\{(y_i)_n\}$ converging to $y_i$ there exists some $n_\circ \in \mathbb{N}$ such that $(y_i)_n\in P_i(x)$ for all $n\geq n_\circ$. In fact, $y_i\in P_i(x)$ implies $(x_{-i},y_i)\succ_i(x_{-i},x_i)$. Hence, on taking $y=(x_{-i},y_i)$ and $y_n=(x_{-i},(y_i)_n)$ we observe that the sequence $\{y_n\}$ converges to $y$. Now, by using lower semi-continuity of $\succ_i$ we get some $n_\circ\in \mathbb{N}$ such that $y_n\succ_i x$ for all $n\geq n_\circ$. Hence, $(y_i)_n\in P_i(x)$ for all $n\geq n_\circ$.
			\item[(d)] Suppose $P_i$ is not lower semi-continuous at some $x=(x_{-i},x_i)\in X$ then there exists $V_i\subset X_i$ with $P_i(x_{-i},x_i)\cap V_i\neq \emptyset$ and for each $n\in \mathbb{N}$ there exists $x_n\in B(x,\frac{1}{n})$ such that $P_i(x_n)\cap V_i=\emptyset$ for all $n\in \mathbb{N}$. Consequently,
			\begin{equation} \label{cont}
				((x_{-i})_n,(x_i)_n) \succeq_i ((x_{-i})_n,z_i)~\forall\,n\in \mathbb{N},~\forall\,z_i\in V_i.
			\end{equation}
			Suppose $z_i\in V_i$ is arbitrary. Assume that $z_n=((x_{-i})_n, z_i)$ and $z=(x_{-i},z_i)$. Clearly, the sequence $\{z_n\}$ converges to $z$. Now, in the view of (\ref{cont}) we get $x\succeq_i z$ by using the continuity of $\succ_i$. Since $z_i$ was chosen arbitrarily, we observe $(x_{-i},x_i) \succeq_i (x_{-i},z_i) \text{ holds for all } z_i\in V_i.$
			But, this contradicts $P_i(x_{-i},x_i)\cap V_i\neq \emptyset$. Finally, we affirm that $P_i$ is l.s.c.
		\end{itemize}
	\end{proof}
	Following result ensures the occurrence of equilibrium for the jointly convex GNEP in which the preference relations of players are incomplete and non-transitive. It is obtained by combining Theorem \ref{existence}, Remark \ref{boundedVI}(i) and Proposition \ref{binary}.
	\begin{theorem}
		Assume that $\mathcal X$ is a convex, compact and non-empty subset of $\mathbb{R}^n$. Suppose for any $i\in \Lambda$, the binary relation $\succ_i$ is irreflexive, convex and continuous over the set $X$. Suppose $K_i:X\rightrightarrows X_i$ is defined as (\ref{constraint}). Then, there exists a solution for $\Gamma'=(\mathcal X,K_i,\succ_i)_{i\in }$.
	\end{theorem}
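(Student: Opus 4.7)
The plan is to reduce the statement to Theorem \ref{existence}, using Proposition \ref{binary} as the dictionary that converts hypotheses stated in terms of the binary relation $\succ_i$ into hypotheses on the associated preference map $P_i$. Concretely, for each $i\in\Lambda$ I would take $P_i:X\rightrightarrows X_i$ to be the map defined in (\ref{pref}); then by definition a vector $\tilde x\in X$ is an equilibrium of $\Gamma'=(\mathcal X, K_i,\succ_i)_{i\in\Lambda}$ if and only if it is an equilibrium of the abstract economy $(\mathcal X, K_i, P_i)_{i\in\Lambda}$, so proving existence of the latter suffices.

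Next I would verify hypothesis (a) of Theorem \ref{existence} piece by piece. Continuity of $\succ_i$ means $\succ_i$ is both l.s.c.\ and u.s.c., so parts (c) and (d) of Proposition \ref{binary} give immediately that $P_i$ is lower semi-continuous and admits open upper sections. The remaining condition $x_i\notin co(P_i(x))$ is the subtle one: Proposition \ref{binary}(a) only supplies $x_i\notin P_i(x)$ from irreflexivity, so I would invoke convexity of $\succ_i$ to upgrade this. If $y_i,z_i\in P_i(x_{-i},x_i)$ and $t\in(0,1)$, then $(x_{-i},y_i)\succ_i(x_{-i},x_i)$ and $(x_{-i},z_i)\succ_i(x_{-i},x_i)$ yield, via convexity of $\succ_i$, that $(x_{-i},\,ty_i+(1-t)z_i)\succ_i(x_{-i},x_i)$. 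Hence $P_i(x)$ is convex, $co(P_i(x))=P_i(x)$, and irreflexivity delivers $x_i\notin co(P_i(x))$.

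For the coercivity hypotheses (b) and (c) of Theorem \ref{existence} I would appeal to Remark \ref{boundedVI}(i): since $\mathcal X$ is compact, the quantity $M=\sup\{\|x\|:x\in\mathcal X\}$ is finite, so picking any $\rho>M$ makes $\mathcal X\setminus \bar B(0,\rho)=\emptyset$, rendering (b) vacuously true, while any $\rho'>\rho$ gives $\mathcal X\cap \bar B(0,\rho')=\mathcal X\neq\emptyset$, which is (c). With all hypotheses of Theorem \ref{existence} now in force for the map $P_i$ built from $\succ_i$, that theorem produces an equilibrium of $(\mathcal X, K_i, P_i)_{i\in\Lambda}$, which by the initial identification is an equilibrium of $\Gamma'$.

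The main obstacle I foresee is precisely the step $x_i\notin co(P_i(x))$: irreflexivity by itself is insufficient, and one must observe that convexity of $\succ_i$ is exactly what makes $P_i(x)$ convex and thus collapses the convex hull. Everything else is a direct citation of Proposition \ref{binary}, Remark \ref{boundedVI}(i), and Theorem \ref{existence}.
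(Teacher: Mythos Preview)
Your proposal is correct and follows exactly the route the paper indicates: the paper states that this theorem ``is obtained by combining Theorem \ref{existence}, Remark \ref{boundedVI}(i) and Proposition \ref{binary},'' and you have carried out precisely that combination. You even supply the one detail the paper leaves implicit---that convexity of $\succ_i$ forces $P_i(x)$ itself to be convex, so that Proposition \ref{binary}(a) upgrades to the required $x_i\notin co(P_i(x))$.
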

	Furthermore, the following result proves the occurrence of equilibrium for generalized Nash equilibrium problems in which the preference relations of players are incomplete and non-transitive. It is obtained by combining Corollary \ref{QVIcoro} and Proposition \ref{binary}.
	\begin{theorem}
		Suppose that the following assumptions hold for any $i\in \Lambda$:
		\begin{itemize}
			\item[(a)] $X_i$ is a non-empty, convex and compact subset of $\mathbb{R}^{n_i}$;
			\item[(b)] $K_i:X\rightrightarrows X_i$ is closed and lower semi-continuous map with $K_i(x)$ being non-empty convex set for any $x\in X$;
			\item[(c)] $\succ_i$ is irreflexive, convex and continuous over the set $X$.
		\end{itemize}
		Then, there exists a solution for $\Gamma=(X_i,K_i,\succ_i)_{i\in \Lambda}$.
	\end{theorem}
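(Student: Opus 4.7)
The plan is to verify each hypothesis of Corollary \ref{QVIcoro} for the preference maps $P_i$ induced by the binary relations $\succ_i$ via formula (\ref{pref}), and then invoke that corollary directly. Hypotheses (a) and (b) of Corollary \ref{QVIcoro} coincide verbatim with hypotheses (a) and (b) of the present theorem, so no work is needed there. Everything reduces to checking hypothesis (c), namely that $P_i$ is lower semi-continuous with open upper sections and that $x_i \notin co(P_i(x))$ for every $x\in X$.

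For the first two conditions in (c), I would invoke Proposition \ref{binary} as the translation device from the binary-relation language to the preference-map language. Part (d) of Proposition \ref{binary}, applied with the continuity of $\succ_i$, delivers lower semi-continuity of $P_i$; part (c) of Proposition \ref{binary}, which uses only the lower semi-continuity half of continuity, delivers the open-upper-section property. Both are immediate.

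The only genuine computation is the fibre condition $x_i \notin co(P_i(x))$, which I expect to be the main (but still mild) obstacle since Proposition \ref{binary}(a) only provides $x_i \notin P_i(x)$. My plan here is to exploit the convexity of the relation $\succ_i$ to upgrade this to $co(P_i(x)) = P_i(x)$. Concretely, pick $y_i,z_i \in P_i(x_{-i},x_i)$ and $t\in(0,1)$; by definition of $P_i$ we have $(x_{-i},y_i)\succ_i(x_{-i},x_i)$ and $(x_{-i},z_i)\succ_i(x_{-i},x_i)$, and applying convexity of $\succ_i$ at the common right-hand side $(x_{-i},x_i)$ with the convex combination $t(x_{-i},y_i)+(1-t)(x_{-i},z_i) = (x_{-i},ty_i+(1-t)z_i)$ yields $ty_i+(1-t)z_i \in P_i(x_{-i},x_i)$. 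Hence $P_i(x)$ is convex, so $co(P_i(x)) = P_i(x)$, and combining with Proposition \ref{binary}(a) (using irreflexivity of $\succ_i$) gives $x_i \notin co(P_i(x))$.

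With all three conditions of hypothesis (c) in hand, Corollary \ref{QVIcoro} applies and produces an equilibrium $\tilde x$ of the abstract economy $\Gamma = (X_i,K_i,P_i)_{i\in\Lambda}$. Since $P_i$ was defined through (\ref{pref}), the condition $P_i(\tilde x)\cap K_i(\tilde x)=\emptyset$ translates directly into the non-existence of $y_i\in K_i(\tilde x)$ with $(\tilde x_{-i},y_i)\succ_i(\tilde x_{-i},\tilde x_i)$, so $\tilde x$ is an equilibrium of $\Gamma = (X_i,K_i,\succ_i)_{i\in\Lambda}$ in the sense defined earlier in Section \ref{secbinary}. The whole argument is a dictionary lookup between the two formulations, so I do not anticipate any serious technical difficulty.
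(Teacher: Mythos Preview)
Your proposal is correct and follows exactly the approach indicated in the paper, which simply states that the result is obtained by combining Corollary \ref{QVIcoro} and Proposition \ref{binary}. You have merely made explicit the one step the paper leaves to the reader, namely that convexity of $\succ_i$ forces $P_i(x)$ itself to be convex so that $co(P_i(x))=P_i(x)$ and irreflexivity then gives $x_i\notin co(P_i(x))$.
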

	
	%Following result proves the occurrence of equilibrium for GNEP with unbounded constraint maps.
	%\begin{theorem}
	%Suppose that the following assumption hold for any $i\in \mathcal{N}$:
	%\begin{itemize}
	%\item[(a)] $X_i$ is a non-empty, closed and convex subset of $\mathbb{R}^{n_i}$;
	%\item[(b)] $K_i$ is closed and lower semi-continuous map with $K_i(x)$ being non-empty convex for any $x\in X$;
	%\item[(c)]  $\succ_i$ is irreflexive, convex and continuous over the set $X$.
	%\end{itemize}
	%Then, there exists an equilibrium for $\Gamma =(X_i,K_i,\succ_i)_{i\in\mathcal{N}}$ if the coercivity criterion ($\mathcal{C}_x$) holds at any $x\in X$ and there exists $\rho>\sup\{\rho_x\,|\,x\in X\}$ in such a way that $K(x)\cap \bar B(0,\rho)\neq \emptyset$ for each $x\in X$.
	%\end{theorem} 
	\section{Applications}
	In this section, we apply the theoretical results established by us to demonstrate the existence of equilibrium for Arrow-Debreu market economy under uncertainty \cite{debreubook}. %The concept of economic equilibrium problem under uncertainty was initiated by Debreu \cite{debreubook}. 
	These problems are recently studied in \cite{milasipref, milasipref2021} by using variational approach. It is worth noticing that Milasi et. al. in \cite{milasipref, milasipref2021} considered the economic equilibrium problems under uncertainty in which the preferences of agents are convex (or semi-strictly convex), incomplete and independent of the strategies chosen by other agents. We check the solvability of economic equilibrium problem under uncertainty in which the preferences of agents are non-convex, non-ordered, price-dependent and inter-dependent.
	
	Consider an economy consisting of finite number of agents ($I$ consumers and $J$ producers) actively trading $L$ commodities. Suppose $\mathcal I=\{1,2,\cdots I\}$, $\mathcal J=\{1,2,\cdots J\}$ and $\mathcal{L}=\{1,2,\cdots L\}$ denote the sets of consumers, producers and commodities, respectively. Assume that these agents trade in two periods of time, $t=0$ (today) and $t=1$ (tomorrow). The market at $t=0$ is generally represented by the state $\mathcal S_0=\{0\}$. Further, the uncertainty at $t=1$ is represented in the form of states which may take place tomorrow. Suppose the set $\mathcal S_1=\{1,2,\cdots s_1\}$ consists of all such states which possibly occur at $t=1$. %Suppose that the situation at any states $i=0,1,\cdots s_1$ is represented by $\zeta_i$. 
	Then, the set $\mathcal S=\mathcal S_0\cup \mathcal S_1$ consists of $S=s_1+1$ states. We assume that $\mathcal E=\{\zeta_0,\zeta_1,\cdots \zeta_{s_1}\}$ represents the set of all possible situations occurring in these states. 
	%Suppose $\mathcal{L}=\{1,2,\cdots L\}$ represents the set of $L$ physical commodities.
	
	In this economy with uncertainty, we assume that the evolution of market is indicated by using an oriented graph $\mathcal G$ with nodes $\mathcal E$ and root $\zeta_0$. Every node $\zeta_s\in\mathcal{E}$ in this graph indicates the \textit{contingency} plan of market in the state $s$. A state-contingent commodity $x_i^l(\zeta_s)\geq 0$ %in $\mathbb{R}$ is  for any $i\in \mathcal{I}$, $l\in \mathcal{L}$ and $\zeta_s\in \mathcal E$. It 
	indicates the quantity of physical commodity $l\in \mathcal{L}$ received by consumer $i\in \mathcal{I}$ if the state $s\in \mathcal{S}$ occurs. Thus, $a_i=((a_i^l(\zeta_s))_{l\in \mathcal{L}})_{s\in \mathcal{S}}$ represents the state-contingent commodity vector of consumer $i$ in the consumption set $A_i\subseteq \mathbb{R}_+^{H}$ where $H=L(s_1+1)$. We indicate $b_j^l(\zeta_s)\geq 0$ (or $\leq 0$) as output (or input) of the commodity $l\in\mathcal{L}$ by a production unit $j\in J$ if the state $s\in \mathcal{S}$ occurs. Thus, a vector $b_j=((b_j^l(\zeta_s))_{l\in \mathcal{L}})_{s\in \mathcal{S}}$ denotes the state-contingent commodity vector of producer $j$ in the set of possible production plans $B_j\subseteq \mathbb{R}^{H}$. Moreover, $p=((p^l(\zeta_s))_{l\in \mathcal{L}})_{s\in \mathcal{S}}\in \mathbb{R}^{H}$ is the system of prices for the state-contingent commodities. 
	
	Consider that the state-contingent commodity vector $e_i=((e_i^l(\zeta_s))_{l\in \mathcal{L}})_{s\in \mathcal{S}}$ is an initial endowment for any consumer $i$, which satisfies $e_i^l(\zeta_s)> \hat x_i^l(\zeta_s)$ for some fixed $\hat x_i=((\hat x_i^l(\zeta_s))_{l\in \mathcal{L}})_{s\in \mathcal{S}}\in X_i$. In the total production, the share of any consumer $i$ is $\sum_{j\in \mathcal{J}}\theta_{ij} y_j$, where the fixed weights $\theta_{ij}$ satisfy $\sum_{i\in \mathcal {I}}\theta_{ij}=1$. Suppose that the price vectors of contingent commodities, that belong to the simplex $\mathbb{R}_+^{H}$, are normalized so that the price set becomes,
	$$\Delta=\bigg\{p\in\mathbb{R}_+^{H}|\, \sum_{l\in \mathcal{L},s\in \mathcal{S}}p^l(\zeta_s)=1\bigg\}.$$
	For a given price vector $p$ and production vector $b=(b_j)_{j\in \mathcal{J}}$ of contingent commodities, the total wealth of consumer $i$ is $\langle p, e_i +\sum_{j\in \mathcal{J}}\theta_{ij} y_j\rangle$. Since the expenditure of consumer on contingent commodities can not exceed his total existing wealth, any consumer $i$ has the following budget constraint,
	$$M_i(p,b)=\bigg\{a_i\in A_i|\,\langle p,a_i\rangle\leq \langle p,e_i\rangle +\max \bigg[0,\sum_{j\in \mathcal{J}}\theta_{ij}\langle p, b_j\rangle\bigg]\bigg \}.$$
	Let us denote,
	$$A=\prod_{i\in \mathcal{I}} A_i~\text{and}~ B= \prod_{j\in \mathcal{J}} B_j.$$ 
	Motivated by the Walrasian equilibrium problem considered in \cite{shafer, tian, yannelis}, we assume that every consumer $i\in \mathcal{I}$ has a preference map $\hat P_i:A\times B\times\Delta\rightrightarrows A_i$ that depends on the choice of other agents and price of contingent commodities, simultaneously. Further, we define the competitive equilibrium for Arrow-Debreu market economy under uncertainty as follows which is motivated by \cite{faccsurvey,yannelis,milasipref,cotrinatime}:
	\begin{definition} \label{defequili}
		A vector $(\tilde a_1,\tilde a_2,\cdots,\tilde a_I,\tilde b_1,\tilde b_2,\cdots \tilde b_J)\in A\times B\subset \mathbb{R}^{H(I+J)}$ and a price system $\tilde p=((\tilde p^l(\zeta_s))_{l\in \mathcal{L}})_{s\in\mathcal{S}}\in \Delta$ together forms a competitive equilibrium for Arrow-Debreu market economy under uncertainty if following conditions hold:
		\begin{align}
			\langle \tilde p,\tilde b_j\rangle =\max_{b_j\in B_j} \langle&\tilde p,b_j\rangle,~\text{for each}~j\in \mathcal{J};\\
			\hat P_i(\tilde a,\tilde b,\tilde p)\cap M_i(\tilde p,&\tilde b)= \emptyset, ~\text{for each}~ i\in \mathcal{I};\label{res2a}\\
			\bigg \langle \tilde p, \sum_{i\in \mathcal{I}} (\tilde a_i- e_i) -\sum_{j\in \mathcal{J}} \tilde b_j\bigg \rangle &=\max_{p\in \Delta}\, \bigg\langle p, \sum_{i\in \mathcal{I}} (\tilde a_i- e_i)-\sum_{j\in \mathcal{J}} \tilde b_j\bigg\rangle.	\label{req1}
			%\begin{align}
			%\sum_{i\in \mathcal{I}} &\tilde x_i^l(\zeta_s)\leq \sum_{j\in \mathcal{J}} \tilde y_j^l(\zeta_s) +\sum_{i\in \mathcal{I}} e_i^l(\zeta_s),\text{and}\\
			%&\langle \tilde p, \sum_{i\in \mathcal{I}} \tilde x_i-\sum_{i\in \mathcal{I}} e_i-\sum_{j\in \mathcal{J}} \tilde y_j\rangle=0.
			%\end{align}
		\end{align} 
	\end{definition}
	Following lemma states that for any contingent commodity, the number of units available for consumption cannot exceed the sum of initial endowments and the units produced. In particular, it shows that the Definition \ref{defequili} implies the \cite[Definition 3.2]{milasipref}.
	\begin{lemma}\label{competitive}
		Assume that the set $B_j\subseteq \mathbb{R}^H$ contains $0$ vector for each $j\in \mathcal{J}$. Then, we have:
		\begin{itemize}
			\item[(a)] $(\tilde a, \tilde b,\tilde p)$ fulfills,
			\begin{equation}\label{res1}
				\sum_{i\in \mathcal{I}} \tilde a_i^l(\zeta_s)\leq \sum_{j\in \mathcal{J}} \tilde b_j^l(\zeta_s) +\sum_{i\in \mathcal{I}} e_i^l(\zeta_s),~\text{for any}~l\in \mathcal{L}~\text{and}~s\in \mathcal{S};
			\end{equation} 
			\item[(b)] $(\tilde a, \tilde b,\tilde p)$ fulfills, 
			\begin{equation}\label{res2}
				\langle \tilde p, \sum_{i\in \mathcal{I}} (\tilde a_i- e_i)-\sum_{j\in \mathcal{J}} \tilde b_j\rangle=0
			\end{equation}
			if for any $i\in \mathcal{I}$, $\hat P_i$ admits non-empty convex values and open upper sections with $a_i\in cl(\hat P_i(a,b,p))$ for each $(a,b,p)\in  A\times B\times \Delta$.
		\end{itemize}
	\end{lemma}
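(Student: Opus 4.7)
The strategy is to use the Walrasian condition (\ref{req1}) together with the budget inequalities to pin down the sign of the excess demand vector $v:=\sum_{i\in\mathcal{I}}(\tilde a_i - e_i) - \sum_{j\in\mathcal{J}} \tilde b_j$. The hypothesis $0\in B_j$ combined with the producer's profit maximization in Definition \ref{defequili} gives $\langle \tilde p,\tilde b_j\rangle \geq \langle \tilde p,0\rangle = 0$ for every $j\in\mathcal{J}$, so the ``max'' in the definition of $M_i(\tilde p,\tilde b)$ collapses to $\sum_{j\in\mathcal{J}}\theta_{ij}\langle \tilde p,\tilde b_j\rangle$; this simplification will be used throughout.

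For part (a), I use that at a competitive equilibrium the consumption $\tilde a_i$ belongs to its budget set $M_i(\tilde p,\tilde b)$ (implicit in Definition \ref{defequili}, since $\tilde a_i$ must be affordable), giving $\langle \tilde p,\tilde a_i\rangle \leq \langle \tilde p,e_i\rangle + \sum_{j\in\mathcal{J}}\theta_{ij}\langle \tilde p,\tilde b_j\rangle$. Summing over $i\in\mathcal{I}$ and invoking $\sum_{i\in\mathcal{I}}\theta_{ij}=1$ yields $\langle \tilde p,v\rangle \leq 0$. The Walrasian law (\ref{req1}) then forces $\max_{p\in\Delta}\langle p,v\rangle \leq 0$; plugging in the standard basis vectors of $\mathbb{R}^H$, which lie in $\Delta$, produces $v^l(\zeta_s) \leq 0$ for every $(l,s)$, which is (\ref{res1}).

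For part (b), the local non-satiation hypothesis $\tilde a_i\in cl(\hat P_i(\tilde a,\tilde b,\tilde p))$ furnishes a sequence $\{a_i^n\}\subset \hat P_i(\tilde a,\tilde b,\tilde p)$ with $a_i^n\to \tilde a_i$. By (\ref{res2a}) each $a_i^n$ lies outside $M_i(\tilde p,\tilde b)$, so $\langle \tilde p,a_i^n\rangle > \langle \tilde p,e_i\rangle + \sum_{j\in\mathcal{J}}\theta_{ij}\langle \tilde p,\tilde b_j\rangle$, and passing to the limit gives $\langle \tilde p,\tilde a_i\rangle \geq \langle \tilde p,e_i\rangle + \sum_{j\in\mathcal{J}}\theta_{ij}\langle \tilde p,\tilde b_j\rangle$. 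Summing over $i$ produces $\langle \tilde p,v\rangle \geq 0$, and combining with the bound $\langle \tilde p,v\rangle \leq 0$ from part (a) forces $\langle \tilde p,v\rangle = 0$, which is (\ref{res2}).

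The main delicate point is recognizing that $\tilde a_i\in M_i(\tilde p,\tilde b)$ is implicit in the equilibrium concept; without it, the Walrasian law alone cannot upper-bound $\langle \tilde p,v\rangle$ and part (a) breaks. Everything else is the classical Walras-law calculation, with openness, convexity and non-emptiness of $\hat P_i$ entering only through the availability of the closure-approximating sequence in part (b).
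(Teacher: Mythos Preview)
Your proposal is correct and follows essentially the same route as the paper's proof. For part (a) both arguments are identical: use $0\in B_j$ to collapse the $\max$, sum the budget inequalities $\tilde a_i\in M_i(\tilde p,\tilde b)$ over $i$ to get $\langle \tilde p,v\rangle\le 0$, invoke (\ref{req1}) to bound $\max_{p\in\Delta}\langle p,v\rangle\le 0$, and evaluate at the unit vectors of $\Delta$. For part (b) the paper argues by contradiction that the budget constraint binds for each individual consumer and then sums, whereas you pass to the limit to obtain the per-consumer inequality $\langle \tilde p,\tilde a_i\rangle\ge \langle\tilde p,e_i\rangle+\sum_j\theta_{ij}\langle\tilde p,\tilde b_j\rangle$ directly, sum, and combine with the bound from (a); this is the same idea in slightly more streamlined form, and your closing remark that convexity and openness of $\hat P_i$ are not actually used in (b) beyond securing the approximating sequence is accurate.
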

	\begin{proof}
		\begin{itemize}
			\item[(a)] Since $0\in B_j$, we have,$\langle \tilde p,\tilde b_j\rangle \geq 0$ for any $j\in\mathcal{J}$. Further, $\tilde a_i\in M_i(\tilde p,\tilde b)$ for each $i\in \mathcal{I}$ implies,
			\begin{equation}\label{req2a}
				\langle \tilde p,\tilde a_i\rangle\leq \langle \tilde p,e_i\rangle +\sum_{j\in \mathcal{J}}\theta_{ij}\langle \tilde p, \tilde b_j\rangle~\text{for any}~i\in\mathcal{I}
			\end{equation}
			Taking sum over $i\in\mathcal{I}$ yields,
			\begin{equation} \label{req2}
				\sum_{i\in\mathcal I} \langle \tilde p, \tilde a_i-e_i\rangle -\sum_{j\in\mathcal{J}} \langle\tilde p, \tilde b_j\rangle \leq 0.
			\end{equation}
			Now, inequality (\ref{req2}) together with Definition \ref{defequili} equation (\ref{req1}) yields,
			\begin{equation}\label{req3}
				\sum_{i\in\mathcal I} \langle p, \tilde a_i-e_i\rangle -\sum_{j\in\mathcal{J}} \langle p, \tilde b_j\rangle \leq 0~\text{for any}~p\in\Delta.
			\end{equation}
			For some $l_\circ\in\mathcal{L}$ and $s_\circ\in\mathcal{S}$, suppose $p=(p^l(\zeta_s))$ is defined as,
			\begin{equation*}
				p^l(\zeta_s)=\begin{cases}
					1,~l=l_\circ,s=s_\circ,\\
					0,~\text{otherwise}.	
				\end{cases}
			\end{equation*}
			Then, clearly $p\in \Delta$ and by substituting $p$ in the inequality (\ref{req3}) we obtain,
			$$\sum_{i\in \mathcal{I}} \tilde a_i^{l_\circ}(\zeta_{s_\circ})\leq \sum_{j\in \mathcal{J}} \tilde b_j^{l_\circ}(\zeta_{s_\circ}) +\sum_{i\in \mathcal{I}} e_i^{l_\circ}(\zeta_{s_\circ}).$$
			Since $l_\circ$ and $s_\circ$ are chosen arbitrarily, we observe inequality (\ref{res1}) follows.
			\item[(b)] We claim that 
			\begin{equation}\label{res2c}
				\langle\tilde p, \tilde a_i\rangle=\langle\tilde p, e_i\rangle+ \sum_{j\in \mathcal{J}}\theta_{ij}\langle p, \tilde b_j\rangle
			\end{equation} 
			for each $i\in \mathcal{I}$. In fact, in the view of inequality (\ref{req2a}), if we suppose that 
			\begin{equation}\label{res2b}
				\langle\tilde p, \tilde a_i\rangle<\langle\tilde p, e_i\rangle+ \sum_{j\in \mathcal{J}}\theta_{ij}\langle p, \tilde b_j\rangle
			\end{equation} 
			for some $i\in \mathcal{I}$. Then, we know that $\hat P_i(\tilde a,\tilde b,\tilde p)$ is non-empty convex open set with $\tilde a_i\in cl(\hat P_i(\tilde a,\tilde b,\tilde p))$. Hence, we obtain a sequence $\{a_i^n\}_{n\in\mathbb{N}}\subseteq \hat P_i(\tilde a,\tilde b,\tilde p)$ converging to $\tilde a_i$. According to (\ref{res2a}), $\hat P_i(\tilde a,\tilde b,\tilde p)\cap M_i(\tilde p,\tilde b)= \emptyset$ and we observe $a_i^n\notin M_i(\tilde p,\tilde b)$ for any $n\in\mathbb{N}$. Thus, we have 
			$$\langle\tilde p, \tilde a_i^n\rangle\geq \langle\tilde p, e_i\rangle+ \sum_{j\in \mathcal{J}}\theta_{ij}\langle p, \tilde b_j\rangle~\text{for any}~n\in \mathbb{N}.$$
			But, this contradicts our assumption in (\ref{res2b}) as we know that the sequence $\{a_i^n\}_{n\in\mathbb{N}}$ converges to $\tilde a_i$. Hence, our claim in (\ref{res2c}) holds true. Finally, we see equation (\ref{res2}) follows if we take sum over $i\in \mathcal{I}$ in equation (\ref{res2c}).
		\end{itemize}
	\end{proof}
	Now, we show the occurrence of the competitive equilibrium for Arrow-Debreu economy under uncertainty by using Theorem \ref{QVImainresult}.
	\begin{theorem}\label{comperesult}
		Assume that for any $i\in \mathcal{I}$ and $j\in \mathcal{J}$ we have,
		\begin{itemize}
			\item[(a)] the sets $A_i\subseteq \mathbb{R}_+^{H}$ and $B_j\subseteq \mathbb{R}^{H}$ are non-empty convex compact;
			\item[(b)] $\hat P_i$ is a lower semi-continuous map with non-empty convex values;
			\item[(c)] $\hat P_i$ admits open upper sections with $a_i\notin \hat P_i(a,b,p)$ but $a_i\in cl(\hat P_i(a,b,p))$ for each %admits non-empty convex values and open upper sections with $a_i\in cl(\hat P_i(a,b,p))$ for each 
			$(a,b,p)\in A\times B\times \Delta$.
		\end{itemize}
		Then, the considered Arrow-Debreu economy under uncertainty admits atleast one competitive equilibrium.
	\end{theorem}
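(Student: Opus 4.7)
The plan is to recast the Arrow-Debreu problem as an abstract economy $\Gamma=(X_k,K_k,P_k)_{k\in\Lambda}$ with $|\Lambda|=I+J+1$ players, and apply Corollary~\ref{QVIcoro}. Alongside the $I$ consumers and $J$ producers, introduce a fictitious auctioneer with strategy set $\Delta$. For consumer $i$, take strategy set $A_i$, constraint map $K_i(a,b,p)=M_i(p,b)$, and preference map $\hat P_i$. For producer $j$, take strategy set $B_j$, constant constraint map equal to $B_j$, and the profit-seeking preference map
\[
P_j^{prod}(a,b,p)=\{b_j'\in B_j\,|\,\langle p,b_j'\rangle>\langle p,b_j\rangle\}.
\]
For the auctioneer, take strategy set $\Delta$, constant constraint map equal to $\Delta$, and the excess-demand preference map
\[
P^{auct}(a,b,p)=\Big\{p'\in\Delta\,\Big|\,\Big\langle p'-p,\sum_{i\in\mathcal{I}}(a_i-e_i)-\sum_{j\in\mathcal{J}}b_j\Big\rangle>0\Big\}.
\]
Any equilibrium $(\tilde a,\tilde b,\tilde p)$ of $\Gamma$ is then a competitive equilibrium: emptiness of $\hat P_i(\tilde a,\tilde b,\tilde p)\cap M_i(\tilde p,\tilde b)$ is exactly (\ref{res2a}); emptiness of $P_j^{prod}(\tilde a,\tilde b,\tilde p)\cap B_j$ yields profit maximization; and emptiness of $P^{auct}(\tilde a,\tilde b,\tilde p)\cap\Delta$ delivers (\ref{req1}).

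Next I verify the hypotheses of Corollary~\ref{QVIcoro}. Hypothesis (a) is immediate, since $\Delta$ is the unit simplex and $A_i,B_j$ are non-empty convex compact by assumption. For hypothesis (c): the values of $\hat P_i$ are convex, so $co(\hat P_i)=\hat P_i$ and $a_i\notin\hat P_i$ follows from the standing hypothesis, while l.s.c. and open upper sections of $\hat P_i$ are directly assumed. The maps $P_j^{prod}$ and $P^{auct}$ are intersections of open half-spaces with convex strategy sets, hence open and convex-valued; their l.s.c. as correspondences and the properties $b_j\notin P_j^{prod}$, $p\notin P^{auct}$ follow from continuity and strictness of the defining linear inequalities. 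Hypothesis (b) is trivial for the constant constraint maps of producers and auctioneer, so the only nontrivial task is to show that the budget correspondence $M_i$ is closed and l.s.c. with non-empty convex values.

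The main obstacle is the l.s.c. of $M_i$; the other required properties are routine. Non-emptiness and convexity of $M_i(p,b)$ are clear (intersection of convex $A_i$ with a closed half-space), and closedness of the graph follows from continuity of the bilinear form $(p,a_i)\mapsto\langle p,a_i\rangle$. For l.s.c. I exploit the survival assumption $e_i^l(\zeta_s)>\hat x_i^l(\zeta_s)$ together with the fact that every $p\in\Delta$ satisfies $\sum_{l,s}p^l(\zeta_s)=1$ with non-negative entries, hence has at least one strictly positive coordinate: this yields
\[
\langle p,\hat x_i\rangle<\langle p,e_i\rangle\leq\langle p,e_i\rangle+\max\!\Big[0,\sum_{j\in\mathcal{J}}\theta_{ij}\langle p,b_j\rangle\Big]
\]
for every $(p,b)\in\Delta\times B$, so $\hat x_i$ lies in the strict interior of $M_i(p,b)$ uniformly in $(p,b)$. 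A standard Slater-type argument now delivers l.s.c.: given $(p_n,b_n)\to(p,b)$ and $a_i\in M_i(p,b)$, the perturbations $a_{i,n}=(1-\epsilon_n)a_i+\epsilon_n\hat x_i$ belong to $M_i(p_n,b_n)$ for a suitably chosen null sequence $\epsilon_n\downarrow 0$ and converge to $a_i$. With all hypotheses in place, Corollary~\ref{QVIcoro} supplies the desired equilibrium of $\Gamma$, which by the construction above is a competitive equilibrium for the Arrow-Debreu economy under uncertainty.
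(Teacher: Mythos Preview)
Your proof is correct and follows essentially the same route as the paper: recast the economy as an abstract game with $I+J+1$ players (consumers, producers, auctioneer), equip producers and the auctioneer with the obvious linear strict-preference maps, and invoke Corollary~\ref{QVIcoro}. The only notable difference is presentational: where the paper dispatches the lower semi-continuity and closedness of the budget correspondence $M_i$ by citing \cite{milasi2013,milasipref}, you supply an explicit Slater-type argument using the survival assumption $e_i>\hat x_i$; this makes your write-up more self-contained but does not change the structure of the proof.
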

	\begin{proof}
		We observe that the given Arrow-Debreu economy under uncertainty is a specific instance of the generalized Nash equilibrium problem $\Gamma=(X_i,K_i,P_i)_{i\in \Lambda}$ considered in Section \ref{mainresult}. In fact, one can consider that there are $I+J+1$ players, including $I$ consumers, $J$ producers and one fictitious player \cite{yannelis}. These players regulates strategy vectors in the sets $X_i$, which are precisely given as,
		\begin{equation*}
			X_i=\begin{cases}
				A_i,&~i\in \{1,2,\cdots I\}\\
				B_{i-I},&~ i\in \{I+1, I+2,\cdots I+J\}\\
				P,&~i= I+J+1. 
			\end{cases}
		\end{equation*}
		Then, $x=(a,b,p)=(a_1,\cdots,a_I,b_1,\cdots,b_J,p)$ in the product set $X$ is given as:
		\begin{equation*}
			x_i=\begin{cases}
				a_i,&~i\in\{1,2,\cdots I\}\\
				b_{i-I},&~ i\in \{I+1, I+2,\cdots I+J\}\\
				p,&~i= I+J+1.
			\end{cases}
		\end{equation*}
		Further, the preference maps $P_i: X \rightrightarrows X_i$ for ${N}=I+J+1$ players in the set $\Lambda=\{1,\cdots,I,1,\cdots,J,I+J+1\}$ are formed as: For $I$ consumers we set $P_i=\hat P_i$, for $J$ producers we form $P_j:A\times B\times \Delta\rightrightarrows B_j$ as, $$P_{j}(a,b,p)=\big\{\bar b_j\in B_j|\,\langle p,\bar b_j\rangle>\langle p,b_j\rangle\big\}$$ and for the fictitious player we form $P_{I+J+1}:A\times B\times \Delta\rightrightarrows \Delta$ as, 
		$$P_{I+J+1}(a,b,p)=\bigg\{ \bar p\in \Delta~\bigg|\,\bigg\langle \bar p, \sum_{i\in \mathcal{I}} (a_i- e_i) -\sum_{j\in \mathcal{J}} b_j\bigg\rangle > \bigg\langle p, \sum_{i\in \mathcal{I}} (a_i- e_i) -\sum_{j\in \mathcal{J}} b_j\bigg\rangle\bigg\}.$$
		
		Moreover, we define the constraint map $K_i$ for these $I+J+1$ players as follows,
		\begin{equation*}
			K_i(a,b,p)= \begin{cases}
				M_i(p,b),~& i\in\{1,2,\cdots I\}\\
				B_{i-I},~&  i\in \{I+1,I+2,\cdots,I+J\}\\
				\Delta,~& i=I+J+1.
			\end{cases}
		\end{equation*}
		Since $A_i$ and $B_j$ are assumed to be non-empty convex compact sets, the map $K_i$ admits non-empty convex compact values for any $i\in \Lambda$. Further, $K_i$ is lower semi-continuous and closed according to \cite{milasi2013,milasipref}.
		
		One can easily verify that the preference map $P_i$ of any player in the set $\Lambda$ has non-empty convex values and admits open upper sections. Further, $P_i$ is a lower semi-continuous map with $x_i\notin P_i(x)$ for any $x\in A\times B\times \Delta$. 
		
		Hence, the considered GNEP $\Gamma$ admits an equilibrium as per Corollary \ref{QVIcoro}. In the view of Lemma \ref{competitive}, this equilibrium is a required competitive equilibrium for Arrow-Debreu economy under uncertainity.
		
		%\begin{equation*}
		%P_i= \begin{cases}
			%P_i,~ i\in\{1,2,\cdots,I\}\\\{\bar b_{i-I}|\,\langle p,\bar b_{i-I}\rangle>\langle p,b_{i-I}\rangle\},~i\in \{I+1,I+2,\cdots,I+J\}\\\{ \bar p|\,\langle \bar p, \sum_{i\in \mathcal{I}} (a_i- e_i) -\sum_{j\in \mathcal{J}} b_j\rangle > \langle p, \sum_{i\in \mathcal{I}} (a_i- e_i) -\sum_{j\in \mathcal{J}} b_j\rangle\}, ~i
			%\end{cases}
			%\end{equation*} 
		\end{proof}
		
		If the preferences of players are represented in terms of a binary relation as described in Section \ref{secbinary} and the set $U_i(x)=\{y_i\in X_i|\,(x_{-i},y_i)\succ_i (x_{-i},x_i)\}$ for any $x\in\prod_{i\in \Lambda} X_i$, then we can ensure the occurrence of competitive equilibrium by combining Theorem \ref{comperesult} with Proposition \ref{binary}. 
		\begin{corollary}\label{corol}
			Assume that for any $i\in \mathcal{I}$ and $j\in\mathcal{J}$ we have,
			\begin{itemize}
				\item[(a)]  the sets $A_i\subseteq \mathbb{R}_+^{H}$ and $B_j\subseteq \mathbb{R}^{H}$ are non-empty convex compact;
				\item[(b)] $\succ_i$ is irreflexive, convex, continuous and non-satiated over the set $\prod_{i\in \mathcal{I}} A_i$;
				\item[(c)] $a_i\in cl(U_i(a,b,p))$ for each $(a,b,p)\in A\times B\times \Delta$.
			\end{itemize}
			Then, the Arrow-Debreu economy under uncertainty admits atleast one competitive equilibrium.
		\end{corollary}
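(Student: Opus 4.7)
The plan is to reduce Corollary \ref{corol} to Theorem \ref{comperesult} by identifying the preference map $\hat{P}_i$ of Theorem \ref{comperesult} with the set-valued map $U_i$ induced by the binary relation $\succ_i$, and then verifying that each hypothesis of Theorem \ref{comperesult} follows from the assumed properties of $\succ_i$ via Proposition \ref{binary}. Hypothesis (a) of Theorem \ref{comperesult} is literally hypothesis (a) of the corollary, so only hypotheses (b) and (c) of Theorem \ref{comperesult} require verification.

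For hypothesis (b) of Theorem \ref{comperesult}, I would check the three needed items on $\hat{P}_i=U_i$ in turn. Non-emptiness of each value is obtained from Proposition \ref{binary}(b) together with the non-satiation of $\succ_i$. Lower semi-continuity of $\hat{P}_i$ follows from Proposition \ref{binary}(d), using that $\succ_i$ is continuous. Convexity of each value $\hat{P}_i(a,b,p)$ is the only item not directly covered by Proposition \ref{binary}, but it is an immediate consequence of the convexity of $\succ_i$: if $y_i,z_i\in \hat{P}_i(a,b,p)$, then $(a_{-i},y_i)\succ_i (a_{-i},a_i)$ and $(a_{-i},z_i)\succ_i (a_{-i},a_i)$, and applying the convexity of $\succ_i$ to these two relations yields $(a_{-i},ty_i+(1-t)z_i)\succ_i (a_{-i},a_i)$ for every $t\in(0,1)$, i.e.\ $ty_i+(1-t)z_i\in \hat{P}_i(a,b,p)$.

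For hypothesis (c) of Theorem \ref{comperesult}, the open upper sections property of $\hat{P}_i$ comes from Proposition \ref{binary}(c), since the continuity of $\succ_i$ entails its lower semi-continuity. The property $a_i\notin \hat{P}_i(a,b,p)$ follows from Proposition \ref{binary}(a) together with the irreflexivity of $\succ_i$. Finally, the closure condition $a_i\in cl(\hat{P}_i(a,b,p))$ is exactly hypothesis (c) of the corollary. Having checked all three hypotheses of Theorem \ref{comperesult}, that theorem yields a competitive equilibrium for the Arrow-Debreu economy under uncertainty, concluding the proof. I do not foresee any substantive obstacle here: the argument is essentially a dictionary translation from properties of $\succ_i$ to properties of $U_i$, and the only item not explicitly recorded in Proposition \ref{binary}, namely convexity of the values of $U_i$, follows in one line from the definition of a convex preference relation.
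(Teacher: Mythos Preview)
Your proposal is correct and follows essentially the same approach as the paper: the paper states just before the corollary that the result is obtained ``by combining Theorem \ref{comperesult} with Proposition \ref{binary}'', and your argument is precisely a careful elaboration of that combination, including the one additional observation (convexity of the values of $U_i$ from convexity of $\succ_i$) not recorded in Proposition \ref{binary}.
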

		
		\begin{remark}
			One can observe that %if the consumption sets of consumers are considered to be bounded in the existence result \cite[Theorem 3.2]{milasipref} then it follows from Corollary \ref{corol}. In fact, 
			under the boundedness of consumption sets, the existence result Corollary \ref{corol} derived by us extends \cite[Theorem 3.2]{milasipref} because in \cite{milasipref} the preference relation of involved consumers $\succ_i$ is semi-strictly convex, ordered (complete and transitive) and independent of the price or choice of other agents. We relaxed certain assumptions over $\succ_i$ by considering that the preference relation of any consumer is convex, non-ordered, inter-dependent and price-dependent. 
		\end{remark}
		\section*{Acknowledgement(s)}
		The first author acknowledges Science and Engineering Research Board, India $\big(\rm{MTR/2021/000164}\big)$ for the financial support. The second author is grateful to the University Grants Commission (UGC), New Delhi, India for the financial assistance provided by them throughout this research work under the registration number: $\big(\rm{1313/(CSIRNETJUNE2019)}\big)$. 
		
	\end{document}